\catcode`\ç\active\def ç{{\c c}}\catcode`\Ç\active\def Ç{{\c C}}
\catcode`\ð\active\def ð{{\u g}}\catcode`\Ð\active\def Ð{{\u G}}
\catcode`\ý\active\def ý{{\i}} \catcode`\Ý\active\def Ý{{\. I}}
\catcode`\ö\active\def ö{{\" o}} \catcode`\Ö\active\def Ö{{\" O}}
\catcode`\þ\active\def þ{{\c s}} \catcode`\Þ\active\def Þ{{\c S}}
\catcode`\ü\active\def ü{{\" u}} \catcode`\Ü\active\def Ü{{\" U}}

\documentclass{amsart}
\usepackage{amsmath}
  \usepackage{paralist}
  \usepackage{graphics} 
  \usepackage{epsfig} 
\usepackage{graphicx}  \usepackage{epstopdf}
 \usepackage[colorlinks=true]{hyperref}
\hypersetup{urlcolor=blue, citecolor=red}

  \textheight=8.2 true in
   \textwidth=5.0 true in
    \topmargin 30pt
     \setcounter{page}{1}



\newtheorem{theorem}{Theorem}[section]

\newtheorem{lemma}[theorem]{Lemma}
\newtheorem{proposition}{Proposition}
\newtheorem{conjecture}{Conjecture}

\theoremstyle{definition}

\newtheorem{remark}{Remark}

\def\R {\mathbb{R}}

\title[Nonlinear Schrödinger equation on the half-line]
      {Qualitative properties of solutions for nonlinear Schrödinger equations with nonlinear boundary conditions on the half-line}

\author[Varga K. Kalantarov and Türker Özsarý ]{}

\subjclass[2010]{Primary: 35B44, 35B33, 35B40; Secondary: 93C20, 93D20, 93D15.}
\keywords{nonlinear Schrödinger equation, blow-up, exponential stabilization, critical exponent, nonlinear boundary condition, damping.}

\email{vkalantarov@ku.edu.tr}
\email{turkerozsari@iyte.edu.tr}


\begin{document}
\maketitle

\medskip
\centerline{\scshape Varga K. Kalantarov\textsuperscript{1} and Türker Özsari\textsuperscript{2},\footnote{Corresponding author.},\footnote{This author's research was supported by Izmir Institute of Technology under the BAP grant 2015ÝYTE43.}}
\medskip
{\footnotesize
 \centerline{}
\centerline{\textsuperscript{1} Department of Mathematics, Koç University}
   \centerline{Sarýyer, Ýstanbul 34450, TURKEY}
 \centerline{}
 \centerline{\textsuperscript{2} Department of Mathematics, Izmir Institute of Technology}
   \centerline{Urla, Ýzmir 35430, TURKEY}
} 

\bigskip


\begin{abstract}
In this paper, we study the interaction between a nonlinear focusing
Robin type boundary source, a nonlinear defocusing interior source,
and a weak damping term for nonlinear Schrödinger equations posed on
the infinite half line.  We construct solutions with negative initial
energy satisfying a certain set of conditions which blow-up in finite time
in the $H^1$-sense.  We obtain a sufficient condition relating the powers of
nonlinearities present in the model which allows construction of blow-up solutions.
In addition to the blow-up property, we also discuss the stabilization property and
the critical exponent for this model.
\end{abstract}
\section{Introduction}
In this paper, we consider the following nonlinear Schrödinger equation (NLS) model
posed on the infinite half line:
\begin{equation}\label{nlNeumannProb} \left\{ \begin{array}{ll}
         i\partial_t u - u_{xx}+k|u|^pu+iau= 0, & \mbox{$t>0$, $x\in I=(0,\infty)$},\\
         u(x,0)=u_0(x), & \mbox{$x>0$},\\
         u_x(0,t)= -\lambda|u(0,t)|^ru(0,t), & \mbox{$t>0$},\end{array} \right.
         \end{equation} where $u(x,t)$ is a complex valued function, the real variables
         $x$ and $t$ are space and time coordinates, and subscripts denote partial derivatives.  The constant parameters satisfy: $\lambda, p, k, r>0$ and $a\ge 0$.  When $\lambda=0$, the boundary condition reduces to the classical homogeneous Neumann boundary condition. When $r=0$, the boundary condition is the classical homogeneous Robin boundary condition.  When $\lambda$ and $r$ are both non-zero as in the present case, the boundary condition can be considered a nonlinear variation of the Robin boundary condition.

NLS is a classical field equation whose popularity increased especially when it was shown to be integrable in \cite{ZS}.  Although  it has many applications in physics, NLS does not model the evolution of a quantum state, unlike the linear Schrödinger equation.   Applications of NLS include transmission of light in nonlinear optical fibers and planar wavequides, small-amplitude gravity waves on the surface of deep inviscid water, and Langmuir waves in hot plasmas \cite{Sulem}, \cite{MB}.  NLS also appears as a universal equation governing the evolution of slowly varying packets of quasi-monochromatic waves in weakly nonlinear dispersive media \cite{Sulem}, \cite{MB}.  Some other interesting applications of NLS include Bose-Einstein condensates \cite{PS}, Davydov's alpha-helix solitons \cite{BR}, and plane-diffracted wave beams in the focusing regions of the ionosphere \cite{AVG}.

There is a large literature on the qualitative behavior of solutions for NLS. Our particular attention in this paper will be the blow-up and stabilization
of solutions at the energy level.  The blow-up theory for nonlinear Schrödinger equations in the presence of a damping term has attracted the attention of several scientists. Some of the major work in this subject are \cite{MT2}, \cite{GF1}, and \cite{OT}. Stabilization of solutions for weakly damped nonlinear Schrödinger equations has been studied well with homogeneous boundary conditions (see for example \cite{TM1}). Regarding nonhomogeneous boundary conditions; see \cite{Ozsari1}-\cite{Ozsari3} .

The model \eqref{nlNeumannProb} with linear main equation ($k=0$) and no damping ($a=0$) has been studied in \cite{AASKD}.  Local existence
and uniqueness of $H^1$ solutions have been obtained for
sufficiently smooth data ($u_0\in H^3(\mathbb{R}_+)$).  For those
local solutions, global existence of $H^1$ solutions has been
obtained for $r<2$ in the case of arbitrarily large data, and for
$r=2$ in the case of small data. It has also been shown that
solutions with strictly negative energy blow up if $r\ge2$ where the energy function is defined by
\begin{equation}\label{energy}E(t)\equiv
\|u_x(t)\|_{L^2(I)}^2-\frac{2\lambda}{r+2}|u(0,t)|^{r+2}+
\frac{2k}{p+2}\|u(t)\|_{L^{p+2}(I)}^{p+2}
\end{equation} for $t\ge
0$. Therefore, $r=2$ was considered to be the critical exponent for
the blow-up problem in the linear model.  There is another study (see \cite{LT1}) where the linear Schrödinger equation was considered with nonlinear boundary conditions. In \cite{LT1},
the authors obtain well-posedness and decay rate estimates at the
$L^2-$level for the Schrödinger equation with nonlinear, attractive, and dissipative boundary conditions of type $\frac{\partial u}{\partial \nu}=ig(u)$, where $g$ satisfies some monotonicity conditions.  Most recently, the nonlinear Schrödinger equation of cubic type was studied with nonlinear dynamical boundary conditions, which are equivalent to so called (nonlinear) Wentzell boundary conditions (see \cite{Wellington}).  However, this work also uses the fact that the structure of the given boundary condition provides a nice monotonicity, which helps to get a semigroup in an appropriate Sobolev space.  The nature of our model is very different than those in \cite{LT1} and \cite{Wellington} due to the lack of monotonicity, since in our case $\lambda$ is real.

Our first aim in this paper is to study the blow-up problem in a more general context than in \cite{AASKD}. In our model, the main equation also includes a nonlinear defocusing term ($k|u|^pu$, $k>0$) and damping ($iau,a\ge 0$).  In particular, we want to understand the nature of the competition between the bad term (nonlinear Robin boundary condition of focusing type) and the good terms (defocusing nonlinearity and damping).  We show that there are solutions which blow up in finite time. More precisely, we prove that solutions cannot exist globally in $H^1$ sense if the initial data and powers of nonlinearities satisfy a certain set of conditions.

The second aim of this paper is to obtain decay rate estimates. We will prove exponential stabilization of solutions where the decay rates are
determined according to the relation between the powers of the nonlinearities. We obtain different decay rates depending on the given relation between the powers of nonlinearities $r$ and $p$.

We comment on the critical exponent in the last chapter of the paper.  Recall that the critical exponent in the case $k=0, a=0$ is $r^*=2$ (see \cite{AASKD}).  However, in the presence of the defocusing nonlinearity, we deduce that the critical exponent must also depend on $p$.  For example, we show that every local solution is also global if $2\le r<\frac{p}{2}$ in Proposition \ref{stab2}.  This shows that sufficiently strong defocusing nonlinearity in the main equation has a dominating effect on the nonlinear boundary condition.
\begin{remark}\label{localassmp}
We do not study the local well-posedness of \eqref{nlNeumannProb}.  We assume that \eqref{nlNeumannProb} has a unique classical local solution on a maximal time interval $[0,T_{max})$ ($0<T_{max}\le \infty$), which lies in a Sobolev space of sufficiently high order and also satisfies the blow-up alternative in $H^1$ sense: either $T_{max}=\infty$ or else $T_{max}<\infty$ and $\|u_x(t)\|_{L^2(I)}\rightarrow \infty$ as $t\uparrow T_{max}$.  For
simplicity, we assume that the initial data is from $H^s(\R^+)$ with $s$ big enough and satisfies the necessary compatibility
condition that guarantees the existence of a local classical solution. Indeed, the second author's recent paper \cite{BO} proves the following local well-posedness theorem for the case $a=0$, but the proof can be trivially adapted to the case $a>0$.

\begin{theorem}[Local well-posedness]\label{LocalWellP} Let $T>0$ be arbitrary, $s\in \left(\frac{1}{2},\frac{7}{2}\right) -\left\{\frac{3}{2}\right\}$,  $p,r>0$, $k,\lambda\in\mathbb{R}-\{0\}$, $u_0\in H^s(\mathbb{R_+})$ together with ${u_0'(0)=-\lambda|u_0(0)|^ru_0(0)}$ whenever $s>\frac{3}{2}$.  We in addition assume the following restrictions on $p$ and $r$:
\begin{itemize}
  \item[(A1)]  If $s$ is integer, then $p\ge s$ if $p$ is an odd integer and $[p]\ge s-1$ if $p$ is non-integer.
  \item[(A2)]  If $s$ is non-integer, then $p>s$ if $p$ is an odd integer and $[p]\ge [s]$ if $p$ is non-integer.
  \item[(A3)] $r>\frac{2s-1}{4}$ if $r$ is an odd integer and $[r]\ge \left[\frac{2s-1}{4}\right]$ if $r$ is non-integer.
\end{itemize}  Then, the following hold true.
\begin{itemize}
  \item[(i)] Local Existence and Uniqueness: There exists a unique local solution $u\in X_{T_0}^s$ of \eqref{nlNeumannProb} for some $T_0=T_0\left(\|u_0\|_{H^s(\mathbb{R}_+)}\right)\in (0,T]$, where $X_{T_0}^s$ is the set of those elements in $$C([0,T_0];H^s(\mathbb{R}_+))\cap C(\mathbb{R}_+^x;H^{\frac{2s+1}{4}}(0,T_0))$$ that are bounded with respect to the norm ${\|\cdot\|_{X_{T_0}^s}}$.   This norm is defined by $$\|u\|_{X_{T_0}^s}:=\sup_{t\in[0,{T_0}]}\|u(\cdot,t)\|_{H^s(\mathbb{R_+})}+\sup_{x\in\mathbb{R}_+}\|u(x,\cdot)\|_{H^{\frac{2s+1}{4}}(0,{T_0})}.$$
  \item[(ii)] Continuous Dependence: If $B$ is a bounded subset of $H^s(\mathbb{R}_+)$, then there is $T_0>0$ (depends on the diameter of $B$) such that the flow $u_0\rightarrow u$ is Lipschitz continuous from $B$ into $X_{T_0}^s$.
  \item[(iii)] Blow-up Alternative: If $S$ is the set of all $T_0\in (0,T]$ such that there exists a unique local solution in $X_{T_0}^s$, then whenever $\displaystyle T_{max}:=\sup_{T_0\in S}T_0<T$, it must be true that ${\displaystyle\lim_{t\uparrow T_{max}}\|u(t)\|_{H^s(\mathbb{R}_+)}=\infty}$.
\end{itemize}
 \end{theorem}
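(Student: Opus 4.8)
The plan is to reduce the statement to the corresponding well-posedness theorem in \cite{BO}, where the case $a=0$ is established, and then to verify that the weak damping term $iau$ contributes only a harmless lower-order perturbation. First I would recall the boundary-operator/Duhamel reformulation used in \cite{BO}: one writes a solution of \eqref{nlNeumannProb} as the superposition of the free half-line evolution of the homogeneous Neumann problem with data $u_0$, the Duhamel term with interior forcing $-k|u|^pu$, and a boundary-forcing term generated by the (half-line) Neumann boundary operator acting on $h(t)=-\lambda|u(0,t)|^ru(0,t)$. A local solution is then produced as a fixed point of the resulting map $\Psi$ on a ball of $X_{T_0}^s$, using the linear estimates for these three pieces — global Kato smoothing, the hidden-regularity trace estimate placing $u(0,\cdot)$ in $H^{\frac{2s+1}{4}}(0,T_0)$, and the mapping properties of the boundary operator back into $X_{T_0}^s$ — together with the nonlinear estimates
\[
\bigl\| |u|^p u \bigr\|_{(\text{interior forcing space})} \le C\,\|u\|_{X_{T_0}^s}^{p+1},\qquad \bigl\| |w|^r w \bigr\|_{H^{\frac{2s+1}{4}}(0,T_0)} \le C\,\|w\|_{H^{\frac{2s+1}{4}}(0,T_0)}^{r+1}.
\]
These are precisely where the arithmetic restrictions (A1)--(A2) on $p$ and (A3) on $r$ are consumed: for non-integer $s$ and non-odd-integer powers one needs enough differentiability of the maps $z\mapsto|z|^pz$ and $z\mapsto|z|^rz$ for fractional chain-rule / Moser-type estimates to apply, while the compatibility condition $u_0'(0)=-\lambda|u_0(0)|^ru_0(0)$ for $s>\tfrac32$ is what makes the $H^s$-valued Duhamel and boundary contributions continuous up to $t=0$.

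Second, to incorporate $a>0$ I would carry $-iau$ as an additional interior forcing term. Since it is linear and of lower order, $\|{-iau}\|_{(\text{interior forcing space})}\le a\,T_0^{\theta}\,\|u\|_{X_{T_0}^s}$ for some $\theta>0$ (or at worst with a constant independent of $T_0$ that is absorbed by shrinking the ball and then $T_0$), so the modified map $\Psi_a$ obeys the same contraction estimates as $\Psi$. An alternative, perhaps cleaner to write, is the substitution $v(x,t):=e^{at}u(x,t)$, which converts \eqref{nlNeumannProb} into the undamped system $i\partial_t v-v_{xx}+k\,e^{-pat}|v|^pv=0$ with boundary condition $v_x(0,t)=-\lambda\,e^{-rat}|v(0,t)|^rv(0,t)$; the coefficients $e^{-pat},e^{-rat}$ are smooth and bounded on $[0,T]$, hence do not disturb any linear or nonlinear estimate of \cite{BO}, and $u=e^{-at}v$ inherits all the conclusions. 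Either way, (i) is the fixed point; (ii) follows by applying the same bilinear-type bounds to $\Psi_a(u)-\Psi_a(\tilde u)$ for data $u_0,\tilde u_0$ ranging over a bounded set $B$, which yields the Lipschitz flow on a common $T_0$ depending on $\mathrm{diam}(B)$; and (iii) is the standard continuation argument — if $\limsup_{t\uparrow T_{max}}\|u(t)\|_{H^s}<\infty$, restart the local theory from a time near $T_{max}$ and extend past it, contradicting maximality.

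The one place where "trivially adapted" deserves scrutiny, and hence the main bookkeeping obstacle, is checking that the exponential factors $e^{-pat},e^{-rat}$ (equivalently, the extra forcing $-iau$) are genuinely harmless in the fractional-in-time trace space $H^{\frac{2s+1}{4}}(0,T_0)$. This reduces to the fact that multiplication by a smooth bounded function of $t$ is bounded on $H^{\sigma}(0,T_0)$ with operator norm controlled by finitely many sup-norms of its derivatives on $[0,T]$, which is routine but is the single estimate not literally present in \cite{BO}. Everything else — the global smoothing estimates, the mapping properties of the Neumann boundary operator, and the role of (A1)--(A3) in the nonlinear estimates — is imported unchanged, so I expect no serious difficulty beyond this verification.
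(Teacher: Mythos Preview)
Your proposal is correct and aligns with the paper's own treatment: the paper does not supply a proof of this theorem at all, but merely states it and remarks that the result is proved in \cite{BO} for $a=0$ and ``can be trivially adapted to the case $a>0$.'' Your write-up in fact goes further than the paper does, spelling out the two natural ways to absorb the damping (as an extra linear forcing in the Duhamel term, or via the substitution $v=e^{at}u$) and flagging the one routine multiplier estimate in $H^{\frac{2s+1}{4}}(0,T_0)$ that needs checking; this is precisely the kind of verification the paper's phrase ``trivially adapted'' is gesturing at.
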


\end{remark}
\section{Main Theorems}
Here are our main results.
\begin{theorem}[Blow-up]\label{MainThm01} Suppose $r> \max\{2,p-2\}$, $E(0)\le 0$,
and \begin{equation}\label{u0assmp}\frac{(a-b)}{2}\int_0^\infty
x^2|u_0(x)|^2dx< Im \int_0^\infty
xu_{0}(x)'\bar{u}_0(x)dx\end{equation} where
$b=\frac{a(r+2)(4-M)}{4(r+2)-2M}<0$, $M=\max\{8,2p\}$. Then, there
exists $T>0$ such that the corresponding local solution $u$ of
\eqref{nlNeumannProb} (see Remark \ref{localassmp}) satisfies
$$\lim_{t\rightarrow T^-}\|u_x(t)\|_{L^2(I)}=\infty.$$
\end{theorem}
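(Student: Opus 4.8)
The plan is to adapt the classical Glassey virial (concavity) argument to the half-line with a nonlinear Robin boundary term and a linear damping. Define the weighted variance $V(t):=\int_0^\infty x^2|u(x,t)|^2\,dx$ and compute its first two time derivatives along sufficiently smooth solutions (justified by the local theory in Remark \ref{localassmp}), integrating by parts on $(0,\infty)$ and carefully tracking the boundary contributions at $x=0$. The first derivative will be $V'(t)=4\,\mathrm{Im}\int_0^\infty x u_x(t)\bar u(t)\,dx$ up to damping corrections (the damping $iau$ contributes a term proportional to $-aV(t)$, which is why the exponential weights $a$ and $b$ appear in hypothesis \eqref{u0assmp}). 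The second derivative is where the energy \eqref{energy} enters: the standard identity gives $V''(t)$ as a linear combination of $\|u_x\|_{L^2}^2$, the defocusing term $\|u\|_{L^{p+2}}^{p+2}$, and the boundary term $|u(0,t)|^{r+2}$, plus damping terms; the key inequality $r>\max\{2,p-2\}$ is precisely what is needed so that, after writing $V''$ in terms of $E(t)$ and the remaining "good-sign" quantities, one obtains $V''(t)\le C\,E(t)+(\text{lower order in }V)$ with the coefficient of the surviving positive terms nonpositive.

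The main structural difficulty, and the reason for the somewhat baroque constants $b$ and $M=\max\{8,2p\}$ in the statement, is the damping term $iau$: it does not interact nicely with the dilation generator $x\partial_x$, so a naive concavity argument on $V$ fails. The remedy I would use is to work instead with a modified functional, e.g. $W(t):=V(t)$ together with an exponentially weighted version of $V'$, or directly with $\mathcal{V}(t):=e^{\beta t}$-type weights chosen so that the damping terms in $\frac{d}{dt}\big(e^{\beta t}V'(t)+\cdots\big)$ cancel or acquire a favorable sign. Concretely, one tracks the energy identity $\frac{d}{dt}E(t)=-2a\|u_x(t)\|_{L^2}^2 - \frac{\text{(boundary/interior damping terms)}}{}$ (so $E(t)\le E(0)\le 0$ up to the damping, and here $E$ may need to be replaced by a weighted $\tilde E(t)=e^{\mu t}E(t)$ or similar to keep it nonpositive), and then shows the quantity governing blow-up satisfies a differential inequality of the form $y''\ge c\,y^{1+\delta}$ or the Levine-type $yy''-(1+\alpha)(y')^2\ge 0$ on a maximal interval. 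The hypothesis \eqref{u0assmp} with the strict inequality and the explicit $b$ guarantees that the relevant first-order quantity has the right sign at $t=0$, so the differential inequality forces a finite blow-up time $T$ for the auxiliary functional.

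From blow-up of the virial-type functional one concludes the $H^1$ blow-up via the blow-up alternative: if $u$ existed globally, or if $\|u_x(t)\|_{L^2}$ stayed bounded up to $T$, then $V(t)$, $V'(t)$, and $V''(t)$ would all remain finite on $[0,T]$ (using $|V'(t)|\le 4\|xu(t)\|_{L^2}\|u_x(t)\|_{L^2}$ and $V''$ bounded in terms of $\|u_x\|_{L^2}^2$, $\|u\|_{L^{p+2}}^{p+2}$, and $|u(0,t)|^{r+2}\lesssim \|u(t)\|_{H^1}^{r+2}$ by the trace/Gagliardo--Nirenberg inequality), contradicting the finite-time singularity of the auxiliary functional. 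Hence $\|u_x(t)\|_{L^2(I)}\to\infty$ at some finite time, which by Remark \ref{localassmp} and Theorem \ref{LocalWellP}(iii) is exactly the asserted conclusion.

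I expect the main obstacle to be the bookkeeping in the second paragraph: isolating the precise algebraic condition on $r$ and $p$ (namely $r>\max\{2,p-2\}$) from the coefficients that appear when $V''$ is rewritten in terms of $E$, and simultaneously choosing the exponential weight (equivalently, pinning down $b=\frac{a(r+2)(4-M)}{4(r+2)-2M}$ and $M=\max\{8,2p\}$) so that every damping-generated term is controlled. Everything else — the integration-by-parts identities for $V'$ and $V''$, the trace estimate for $|u(0,t)|$, and the concavity/ODE lemma — is routine once the weighted functional is correctly set up.
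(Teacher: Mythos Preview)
Your plan has the right ingredients --- the weighted variance $I(t)=\int_0^\infty x^2|u|^2\,dx$, its first derivative $-4\,\mathrm{Im}\int x\bar u u_x\,dx$ (the paper's $V(t)$), the virial identity for the second derivative, and exponential weights $e^{2bt}$ to absorb the damping --- and this matches the paper's framework, which follows Tsutsumi \cite{MT2}. But the concluding mechanism you describe is not the one that works here, and this is a genuine gap. You propose to show that some auxiliary functional \emph{blows up} and then argue by contradiction that $\|u_x\|$ cannot stay bounded. In the paper the logic runs the other way: one proves that the weighted variance $z(t)=e^{2bt}I(t)$ \emph{tends to zero} at a finite time $T$ (computed explicitly from $V(0)$ and $I(0)$ via a Gronwall-type integral inequality for $\int_0^t V(s)e^{2bs}\,ds$, not a second-order concavity/Levine argument), and then the uncertainty-type estimate $\|u(t)\|_{L^2}^2\le 2\|xu(t)\|_{L^2}\|u_x(t)\|_{L^2}$ together with the exact mass law $\|u(t)\|_{L^2}^2=e^{-2at}\|u_0\|_{L^2}^2$ forces $\|u_x(t)\|_{L^2}\to\infty$ as $t\uparrow T$. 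The Levine inequality $yy''-(1+\alpha)(y')^2\ge0$ you mention does not appear and would be hard to set up for the variance, which is decreasing here, not exploding.

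A second point where your sketch would run into trouble is the energy control for $a>0$. You write that ``$E(t)\le E(0)\le 0$ up to the damping,'' but the correct identity is
\[
E'(t)=-2aE(t)-\frac{2akp}{p+2}\|u(t)\|_{L^{p+2}}^{p+2}+\frac{2a\lambda r}{r+2}|u(0,t)|^{r+2},
\]
and the boundary term enters with a \emph{positive} sign, so $E$ is not monotone and no simple weighted version $e^{\mu t}E(t)$ stays nonpositive. The paper handles this by introducing an auxiliary quantity $\theta(t)\le E(t)$ (with a modified coefficient in front of the boundary term) and proving $\int_0^t\theta(s)e^{2bs}\,ds\le 0$; this, not a sign on $E(t)$ itself, is what feeds into the virial inequality. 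The specific choice $b=\frac{a(r+2)(4-M)}{4(r+2)-2M}$ with $M=\max\{8,2p\}$ is exactly what makes the virial quantity $\theta_1$ satisfy $\theta_1\le M\theta$, so that the integral of $\theta_1 e^{2bs}$ is nonpositive as well. Your intuition that $r>\max\{2,p-2\}$ is the algebraic condition making the coefficients line up is correct; it is the mechanism after that point that needs to be rerouted.
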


\begin{remark}Note that in the case $a=0$, the assumption \eqref{u0assmp}
reduces to $$Im\int_0^\infty xu_{0}'\bar{u}_0dx>0.$$
This is the same assumption on the initial data in the context of the classical
paper \cite{G}. \end{remark}
\begin{remark}Note that we do not assume that the initial energy is strictly negative.  In the case $E(0)=0$, solutions do not have to blow-up if
one disregards \eqref{u0assmp}, e.g., the zero solution.  As we will see in the proof, the condition \eqref{u0assmp} forces solutions to blow-up in this case.  However, if one puts a stronger assumption on the initial energy, such as strict negativeness in the case $a=0$, we believe that by using a compactly supported weight function, see for example \cite{OgTs}, one might remove the condition \eqref{u0assmp} and still obtain the blow-up in $H^1$ sense.  \end{remark}
\begin{theorem}[Stabilization]\label{MainThm02} Suppose $u$ is a local solution of \eqref{nlNeumannProb} (see Remark \ref{localassmp}). Then we have the following:
\begin{enumerate}
  \item[(i)] if $a>0, r<2$, then $u$ is global and
 $$\|u(t)\|_{H^1(I)}^2\le Ce^{-(2a-\epsilon)t}, t\ge 0$$ where $\epsilon>0$
 is fixed and small (can be chosen arbitrarily small), and $C=C(u_0,\epsilon,r)$
 is a non-negative constant.
  \item [(ii)] if $a>0, 2\le r <\frac{p}{2}$, then $u$ is global and
 $$\|u(t)\|_{H^1(I)}^2\le Ce^{-(a\mu-\epsilon)t}, t\ge 0$$ where
 \begin{equation}\label{defmu}
 \mu=\frac{(p+2)(p-2r)}{p(p+2)-2r},
 \end{equation}
 and $\epsilon>0$ is fixed and small (can be chosen arbitrarily small),
  and $C=C(u_0,\epsilon,r,p)$ is a non-negative constant.
 \item[(iii)] if $a>0, r=2, p\le 4$, and $u_0$ is sufficiently small in $L^2$ sense,
 then $u$ is global and
 $$\|u(t)\|_{H^1(I)}^2\le Ce^{-2at}, t\ge 0,$$ where $C=C(u_0,p)$ is a non-negative
 constant.
 \item[(iv)] if $a>0, r>2, r\ge \frac{p}{2}$, and $u_0$ is sufficiently small
 in $H^1\cap L^{p+2}$ sense, then $u$ is global and
 $$\|u(t)\|_{H^1(I)}^2\le Ce^{-2at}, t\ge 0,$$ where $C=C(u_0,r,p)$ is a non-negative constant.

\end{enumerate}
\end{theorem}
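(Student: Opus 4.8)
The plan is to run an energy method based on two differential identities. Testing \eqref{nlNeumannProb} against $\bar u$, integrating over $I$, and taking imaginary parts, the boundary contribution $u_x(0,t)\overline{u(0,t)}=-\lambda|u(0,t)|^{r+2}$ is real and drops out, leaving the mass law $\frac{d}{dt}\|u(t)\|_{L^2(I)}^2=-2a\|u(t)\|_{L^2(I)}^2$, so $\|u(t)\|_{L^2(I)}^2=e^{-2at}\|u_0\|_{L^2(I)}^2$; this already settles the $L^2$-component of all four assertions and propagates the $L^2$-smallness hypotheses. Testing against $\bar u_t$, integrating, and taking real parts --- legitimate by the high-order regularity assumed in Remark \ref{localassmp} and Theorem \ref{LocalWellP} --- the boundary contribution is $-\frac{\lambda}{r+2}\frac{d}{dt}|u(0,t)|^{r+2}$, and one arrives at $E'(t)=-2a\|u_x(t)\|_{L^2(I)}^2+2a\lambda|u(0,t)|^{r+2}-2ak\|u(t)\|_{L^{p+2}(I)}^{p+2}$, equivalently
\[
E'(t)=-2aE(t)+\frac{2a\lambda r}{r+2}|u(0,t)|^{r+2}-\frac{2akp}{p+2}\|u(t)\|_{L^{p+2}(I)}^{p+2}.
\]
The only term with the wrong sign is the focusing boundary term $|u(0,t)|^{r+2}$, and the entire argument consists of absorbing it.

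The absorption rests on the sharp one-dimensional trace inequality $|v(0)|^2\le 2\|v\|_{L^2(I)}\|v'\|_{L^2(I)}$, which gives $|u(0,t)|^{r+2}\le 2^{(r+2)/2}\|u(t)\|_{L^2(I)}^{(r+2)/2}\|u_x(t)\|_{L^2(I)}^{(r+2)/2}$, and, when needed, on the refinement $|u(0,t)|^{r+2}\le(r+2)\|u(t)\|_{L^{2r+2}(I)}^{r+1}\|u_x(t)\|_{L^2(I)}$ combined with the interpolation $\|u\|_{L^{2r+2}(I)}\le\|u\|_{L^2(I)}^{1-\theta}\|u\|_{L^{p+2}(I)}^{\theta}$, available precisely when $2r+2\le p+2$, i.e.\ $r\le p/2$. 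In (i) the exponent $(r+2)/2$ is less than $2$, so Young's inequality yields $|u(0,t)|^{r+2}\le\epsilon\|u_x(t)\|_{L^2(I)}^2+C_\epsilon\|u(t)\|_{L^2(I)}^{2(r+2)/(2-r)}$ with $\epsilon$ at our disposal, and since $\frac{2(r+2)}{2-r}>2$ the remainder decays strictly faster than $e^{-2at}$. In (ii) the refined bound, together with interpolation and Young's inequality played off against the defocusing term present in $E$, gives $|u(0,t)|^{r+2}\le\epsilon\big(\|u_x(t)\|_{L^2(I)}^2+\|u(t)\|_{L^{p+2}(I)}^{p+2}\big)+C_\epsilon\|u(t)\|_{L^2(I)}^{q}$ for an exponent $q=q(r,p)$ determined by the Hölder conjugacies; tracking $q$ is exactly what produces the rate $\mu$ of \eqref{defmu}. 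In (iii)--(iv) we exploit smallness instead: for $r=2$ (so $p\le 4$, i.e.\ $r\ge p/2$) the sharp bound reads $\frac{2\lambda}{r+2}|u(0,t)|^{r+2}\le 2\lambda\|u(t)\|_{L^2(I)}^2\|u_x(t)\|_{L^2(I)}^2$, so the mass decay together with $L^2$-smallness of $u_0$ makes the coefficient of $\|u_x\|^2$ both small at $t=0$ and exponentially decaying; for $r>2$, $r\ge p/2$, one writes $\|u_x\|^{(r+2)/2}=\|u_x\|^{(r-2)/2}\|u_x\|^2$, and the factor $\|u(t)\|_{L^2(I)}^{(r+2)/2}\|u_x(t)\|_{L^2(I)}^{(r-2)/2}$ is small and decaying once $\|u_0\|_{H^1\cap L^{p+2}}$ is small and $\|u_x(t)\|_{L^2(I)}$ is known to remain bounded --- a fact to be bootstrapped.

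One then closes the estimate in two coupled steps. First, inserting the trace bound into the definition \eqref{energy} of $E$ and fixing $\epsilon$ (or the smallness) small gives the coercivity $\|u_x(t)\|_{L^2(I)}^2\le C\,E(t)+C\,e^{-\sigma_0 t}$ for some $\sigma_0>0$. Second, inserting it into the energy identity and discarding the favorable defocusing term produces, in (i)--(ii), a differential inequality $E'(t)\le -c\,E(t)+C\,e^{-\sigma t}$ with $c$ arbitrarily close to $2a$ and $\sigma$ the decay rate of the remainder, and, in (iii)--(iv), the inequality $E'(t)\le-(2a-\eta(t))E(t)$ with $\eta\ge 0$ integrable over $[0,\infty)$. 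Gronwall's lemma then gives $E(t)\le Ce^{-(2a-\epsilon)t}$ in (i), $E(t)\le Ce^{-(a\mu-\epsilon)t}$ in (ii), and $E(t)\le Ce^{-2at}$ in (iii)--(iv); combined with the coercivity and the mass law this yields the stated bounds for $\|u(t)\|_{H^1(I)}^2$. In particular $\|u_x(t)\|_{L^2(I)}$ remains bounded on $[0,T_{max})$, so the blow-up alternative in Remark \ref{localassmp} and Theorem \ref{LocalWellP} forces $T_{max}=\infty$. In (iii)--(iv) this must be organized as a continuity argument: with $T^*$ the first time the relevant norm of $u$ reaches twice its initial value, the a priori estimate above improves the bound on $[0,T^*)$, hence $T^*=T_{max}=\infty$.

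I expect the genuine obstacle to be precisely the focusing boundary term $|u(0,t)|^{r+2}$: it must be kept simultaneously small relative to $\|u_x\|^2$ (for coercivity) and small relative to $2aE$ (for the decay inequality), using only the trace inequality, the defocusing interior term, and the mass decay. The threshold $r=\max\{2,p/2\}$ is exactly where these tools run out --- below it the Young and interpolation exponents are admissible with a genuinely decaying remainder, at (and above) it the remainder ceases to decay and one is pushed into the smallness regime, with a bootstrap whose constants must be kept independent of $T_{max}$. A secondary point is carrying the Hölder conjugate exponents accurately enough to land on the exponent $\mu$ in \eqref{defmu}; a mild technical point is justifying the integrations by parts underlying the two identities via the high-order regularity of Remark \ref{localassmp}.
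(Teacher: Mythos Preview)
Your plan is correct and matches the paper's argument: the same mass law, the same energy identity $E'=-2aE+\frac{2a\lambda r}{r+2}|u(0,t)|^{r+2}-\frac{2akp}{p+2}\|u\|_{L^{p+2}}^{p+2}$, the same trace bound $|u(0,t)|^2\le 2\|u\|_{L^2}\|u_x\|_{L^2}$, the same interpolation of $\|u\|_{L^{2r+2}}$ between $L^2$ and $L^{p+2}$ in case~(ii), and smallness/bootstrap in (iii)--(iv), all closed by Gronwall and the blow-up alternative. The only organizational differences are cosmetic: the paper multiplies the energy identity by $e^{2at}$, integrates, and runs Gronwall directly on $\|u_x(t)\|_{L^2}^2e^{2at}$ (respectively on $\Psi(t)=(\|u_x\|_{L^2}^2+\tfrac{2k}{p+2}\|u\|_{L^{p+2}}^{p+2})e^{2at}$ in~(ii)) rather than splitting into a coercivity step plus a differential inequality for $E$, and in~(iv) it invokes an explicit Strauss-type algebraic lemma ($\Phi\le C_1+C_2\Phi^\sigma$ with $\Phi(0)\le C_1$ and $C_1C_2^{1/(\sigma-1)}$ small implies $\Phi\le\frac{\sigma}{\sigma-1}C_1$) applied to $S(t)=\sup_{[0,t]}\|u_x\|_{L^2}^2e^{2as}$, which is exactly your continuity/bootstrap argument written as a lemma.
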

\begin{remark}
The following problem remains open:
\begin{itemize}
  \item Is it possible to construct blow up solutions in the two cases
  $r=2,p\le4$ and $r>2,p-2\ge r\ge\frac{p}{2}$?
\end{itemize}
In our analysis, we show that this is not possible whenever we choose small
enough initial data.  However, this does not mean one cannot construct
blow-up solutions with arbitrary initial data.  An answer to the above problem
will also help to determine the critical exponent for our model, see Section \ref{SecCrt}.
\end{remark}
We summarize our results in the following table:
\begin{center}
\begin{table}[h]
\begin{tabular}{|l|l|l|l|}\hline
Nonlinear Powers                & Blow-up  & Local$\Rightarrow$Global  & Exp. Stabilization  \\
                 & ($a\ge0$)      &      ($a\ge0$)         &   ($a>0$)        \\\hline
r\textless2                     & NO      & YES              & YES           \\
                 &      &               &  Decay rate $\sim O(e^{-(2a-\epsilon)t})$         \\\hline
$2\le r< \frac{p}{2}$           & NO      & YES              & YES           \\
                 &      &               &  Decay rate $\sim O(e^{-(a\mu-\epsilon)t})$ (See \eqref{defmu})         \\\hline
$r=2,p\le4$                     &     & Small Sol.  & Small Sol.\\
                &      &               &  Decay rate $\sim O(e^{-2at})$         \\
                                   &   OPEN    &   Large Sol: OPEN             &  Large Sol: OPEN         \\\hline
$r>2,p-2\ge r\ge\frac{p}{2}$    &     & Small Sol.  & Small Sol.\\
                 &     &             &  Decay rate $\sim O(e^{-2at})$         \\
                  &   OPEN    &     Large Sol: OPEN             &  Large Sol:OPEN          \\\hline
$r>2,r>p-2$                     &     & ONLY Small Sol.  & ONLY Small Sol.\\
     &   &             &  Decay rate $\sim O(e^{-2at})$         \\
     &  YES   &             &         \\\hline
\end{tabular}
\caption{}
\end{table}
\end{center}

\section{Blow-up Solutions: Proof of Theorem \ref{MainThm01}}

\subsection{Case $a\neq 0$:}  In this section, we prove
Theorem \ref{MainThm01} for the case $a\neq 0$,  slightly  modifying
the proof in \cite{MT2}.

\begin{lemma}\label{massenergylemma} Let $u$ be a local solution of
\eqref{nlNeumannProb} (see Remark \ref{localassmp}) and $b\in \mathbb{R}$.
Then, \begin{enumerate}
\item[(i)] $\|u(t)\|_{L^2(I)}^2=e^{-2at}\|u_0\|_{L^2(I)}^2$,
 \item[(ii)] $E(t)e^{2bt}=E(0)+\int_0^te^{2bs}\rho(s)ds$
  \end{enumerate}  for $T_0>t \ge 0,$ where $\rho$ is given by
  \eqref{gs}, and $E(t)$ is defined in \eqref{energy}.
\end{lemma}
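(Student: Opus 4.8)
The plan is to obtain both identities by testing the equation in \eqref{nlNeumannProb} against suitable multipliers and then to repackage the $E$-identity into the integrated form claimed in (ii). Concretely, I would test against $\bar u$ for (i) and against $\partial_t\bar u$ for (ii); the only genuine issue is the rigour of the integrations by parts, which rests on the regularity and spatial decay of the classical solution assumed in Remark~\ref{localassmp}.

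For (i): multiply the equation in \eqref{nlNeumannProb} by $\bar u$, integrate over $I=(0,\infty)$, and integrate by parts in $-\int_I u_{xx}\bar u\,dx$. Since the classical solution decays at $x=+\infty$, the only boundary contribution is at $x=0$ and equals $u_x(0,t)\bar u(0,t)=-\lambda|u(0,t)|^{r+2}$, which is real; the interior term $k\int_I|u|^{p+2}\,dx$ is also real. Taking imaginary parts therefore kills everything except
\[
\tfrac12\tfrac{d}{dt}\|u(t)\|_{L^2(I)}^2 + a\|u(t)\|_{L^2(I)}^2 = 0 ,
\]
and solving this linear ODE gives (i).

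For (ii): multiply \eqref{nlNeumannProb} by $\partial_t\bar u$, integrate over $I$, and take real parts. The term $i\int_I|\partial_t u|^2\,dx$ is purely imaginary and drops; integrating $-\int_I u_{xx}\partial_t\bar u\,dx$ by parts yields $\tfrac12\tfrac{d}{dt}\|u_x(t)\|_{L^2(I)}^2$ plus the boundary term $\mathrm{Re}\!\left(u_x(0,t)\,\partial_t\bar u(0,t)\right)$, which by the boundary condition and $\tfrac{d}{dt}|u(0,t)|^{r+2}=(r+2)|u(0,t)|^{r}\,\mathrm{Re}\!\left(u(0,t)\,\partial_t\bar u(0,t)\right)$ equals $-\tfrac{\lambda}{r+2}\tfrac{d}{dt}|u(0,t)|^{r+2}$; the nonlinear interior term contributes $\tfrac{k}{p+2}\tfrac{d}{dt}\|u(t)\|_{L^{p+2}(I)}^{p+2}$; and the damping contributes $-a\,\mathrm{Im}\!\int_I u\,\partial_t\bar u\,dx$. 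Collecting these and comparing with \eqref{energy} gives $\tfrac12 E'(t)=a\,\mathrm{Im}\!\int_I u\,\partial_t\bar u\,dx$. To make the right side explicit, substitute $\partial_t\bar u = i\bar u_{xx}-ik|u|^p\bar u - a\bar u$ from the equation and integrate by parts once more, using $\bar u_x(0,t)=-\lambda|u(0,t)|^r\bar u(0,t)$; one obtains $\mathrm{Im}\!\int_I u\,\partial_t\bar u\,dx=\lambda|u(0,t)|^{r+2}-\|u_x(t)\|_{L^2(I)}^2-k\|u(t)\|_{L^{p+2}(I)}^{p+2}$, hence
\[
E'(t)=2a\!\left(\lambda|u(0,t)|^{r+2}-\|u_x(t)\|_{L^2(I)}^2-k\|u(t)\|_{L^{p+2}(I)}^{p+2}\right).
\]

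To finish, set $\rho(t):=E'(t)+2bE(t)$; expanding $E(t)$ via \eqref{energy} this is exactly the function displayed in \eqref{gs}. Then $\tfrac{d}{dt}\!\left(e^{2bt}E(t)\right)=e^{2bt}\!\left(E'(t)+2bE(t)\right)=e^{2bt}\rho(t)$, and integrating from $0$ to $t$ yields (ii). The main obstacle is not any of these computations but their rigorous justification: one needs $t\mapsto E(t)$ to be $C^1$, enough spatial decay of $u$ and $u_x$ for the boundary terms at infinity to vanish, and differentiability in $t$ of the non-smooth quantities $|u|^p u$ and $|u(0,t)|^{r+2}$ along the solution — all valid for $p,r>0$. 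The clean way to handle this is to establish the identities first for smooth, rapidly decaying approximating solutions and then pass to the limit using the local well-posedness in Remark~\ref{localassmp}, and it is worth recording explicitly which norms are used in that limiting argument.
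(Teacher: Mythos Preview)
Your proposal is correct and follows essentially the same route as the paper: both test the equation against $\bar u$ (imaginary part) for (i) and against $\partial_t\bar u$ (real part) for (ii), substitute the equation once more to evaluate the damping contribution, and then use the integrating factor $e^{2bt}$ to obtain the integrated identity. Your additional remarks on the rigorous justification via approximation go a bit beyond what the paper records, but they do not change the argument.
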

\begin{proof}
We multiply \eqref{nlNeumannProb} by $\bar{u}$, take the imaginary parts,
integrate over $I\equiv (0,\infty)$, and obtain the exponential decay of the
$L^2-$norm (conservation when $a=0$) of the solution.
$$\frac{1}{2}\frac{d}{dt}\|u(t)\|_{L^2(I)}^2=-a\|u(t)\|_{L^2(I)}^2\Rightarrow
\|u(t)\|_{L^2(I)}^2=e^{-2at}\|u_0(x)\|_{L^2(I)}^2.$$

Now, we multiply \eqref{nlNeumannProb} by $\bar{u}_t$, take two real
parts, integrate the obtained relation over $I$, and get
\begin{multline}\label{H1iden01}\frac{d}{dt}\left(\|u_x(t)\|_{L^2(I)}^2-
\frac{2\lambda}{r+2}|u(0,t)|^{r+2}+\frac{2k}{p+2}\|u(t)\|_{L^{p+2}(I)}^{p+2}\right)
\\= 2Re \int_0^\infty ia\bar{u}u_tdx= 2a Re \int_0^\infty \bar{u}(x,t)\left(u_{xx}-
k|u|^pu-iau\right)dx\\
=-2a\left(\|u_x(t)\|_{L^2(I)}^2-\lambda|u(0,t)|^{r+2}+k
\|u(t)\|_{L^{p+2}(I)}^{p+2}\right)\\
=-2a\left(\|u_x(t)\|^2-\frac{2\lambda}{r+2}|u(0,t)|^{r+2}+
\frac{2k}{p+2}\|u(t)\|_{L^{p+2}(I)}^{p+2}\right)\\
-\frac{2akp}{p+2}\|u(t)\|_{L^{p+2}(I)}^{p+2}+{\frac{2a\lambda
r}{r+2}|u(0,t)|^{r+2}}.
\end{multline}

Then,  the identity in \eqref{H1iden01} is simply
\begin{equation}\label{H1iden01b}
E'(t)=-2aE(t)-\frac{2akp}{p+2}\|u(t)\|_{L^{p+2}(I)}^{p+2}+
{\frac{2a\lambda r}{r+2}|u(0,t)|^{r+2}}.
\end{equation}
Adding $2bE(t)$ to both sides, where $b\in\mathbb{R}$ and $b<a$, we have
\begin{equation}\label{E2a2b01}E'(t)+2bE(t)=(2b-2a)E(t)-
\frac{2akp}{p+2}\|u(t)\|_{L^{p+2}(I)}^{p+2}+{\frac{2a\lambda r}{r+2}|u(0,t)|^{r+2}}.
\end{equation}
Rewriting the right hand side of \eqref{E2a2b01} by using the definition of $E(t)$,
we have
\begin{multline*}
E'(t)+2bE(t)=-(2a-2b)\|u_x(t)\|_{L^2(I)}^2-\frac{4\lambda
b}{r+2}|u(0,t)|^{r+2}\\
+ \frac{4kb}{p+2}\|u(t)\|_{L^{p+2}(I)}^{p+2}
+{2a\lambda}|u(0,t)|^{r+2}-{2ak}\|u(t)\|_{L^{p+2}(I)}^{p+2}.
\end{multline*}
Multiplying both sides by $e^{2bt}$ and integrating over $(0,t)$, we
have
\begin{equation*}
E(t)e^{2bt}=E(0)+\int_0^te^{2bs}\rho(s)ds,
\end{equation*} where
\begin{multline}\label{gs}
\rho(t)=-(2a-2b)\left(\|u_x(t)\|_{L^2(I)}^2-
\left(\frac{a(r+2)-2b}{2a-2b}\right)\frac{2\lambda}{r+2}|u(0,t)|^{r+2}\right.\\
\left.+\left(\frac{a(p+2)-2b}{2a-2b}\right)\frac{2k}{p+2}\|u(t)\|_{L^{p+2}(I)}^{p+2}\right).
\end{multline}
\end{proof}

Let us set
\begin{equation}\theta(t)\equiv
\|u_x(t)\|_{L^2(I)}^2-\left(\frac{a(r+2)-2b}{2a-2b}\right)
\frac{2\lambda}{r+2}|u(0,t)|^{r+2}+
\frac{2k}{p+2}\|u(t)\|_{L^{p+2}(I)}^{p+2}.
\end{equation}
Note that $\frac{a(r+2)-2b}{2a-2b}\ge 1$, which implies
$\theta(t)\le E(t)$. Therefore,
$\frac{a(p+2)-2b}{2a-2b}-1=\frac{ap}{2a-2b}>0$, and by Lemma
\ref{massenergylemma}
\begin{multline}\label{e11}\theta(t)e^{2bt}\le E(t)e^{2bt}=
 E(0)-(2a-2b)\int_0^t \left(\theta(s)+\left(\frac{ap}{2a-2b}\right)
 \frac{2k}{p+2}\|u(s)\|_{L^{p+2}(I)}^{p+2}\right)e^{2bs}ds\\
\le E(0)-(2a-2b)\int_0^t \theta(s)e^{2bs}ds.
\end{multline}

Multiplying \eqref{e11} by $e^{(2a-2b)t}$, we get
\begin{equation}\frac{d}{dt}\left( e^{(2a-2b)t}\int_0^te^{2bs}
\theta(s)ds\right)\le E(0)e^{(2a-2b)t}\end{equation}
from which it follows that \begin{equation}\label{e1negative}\int_0^t\theta(s)e^{2bs}ds
\le  0\end{equation} provided that $E(0)\le 0.$

Now, we set \begin{equation}I(t)=\int_0^\infty x^2|u|^2dx, V(t)=
-4Im\int_0^\infty \bar{u}xu_xdx,\text{ and }y(t)=-\frac{1}{4}V(t).\end{equation}
We have the following lemma.
\begin{lemma}\label{virial}
$I$ and $y$ satisfy the following identities:
\begin{enumerate}
  \item[(i)] $e^{2bt}I(t)+(2a-2b)\int_0^te^{2bs}I(s)ds=I(0)+\int_0^tV(s)e^{2bs}ds$,
  \item[(ii)] $\dot{y}+2ay = -\frac{1}{4}\theta_1$, and
  \item[(iii)] $V(t)e^{2bt}=V(0)+(2b-2a)\int_0^tV(s)e^{2bs}ds+\int_0^t\theta_1(s)^{2bs}ds$
\end{enumerate} for $T_0\ge t\ge 0$ where $\theta_1$ is given in \eqref{Vprime}.
\end{lemma}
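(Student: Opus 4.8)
The plan is to derive the three identities in Lemma \ref{virial} by the classical virial (variance) computation adapted to the damped equation with the nonlinear boundary term, paralleling the weighted-energy identities already established in Lemma \ref{massenergylemma}. First I would differentiate $I(t)=\int_0^\infty x^2|u|^2\,dx$ in time: writing $\frac{d}{dt}|u|^2 = 2\operatorname{Re}(\bar u u_t)$ and substituting $u_t = i(u_{xx}-k|u|^pu-iau)$ from \eqref{nlNeumannProb}, the interior potential term $k|u|^pu$ contributes nothing to the real part, the damping contributes $-2a|u|^2$, and the dispersive term gives $-2\operatorname{Im}(\bar u u_{xx})$. Integrating $x^2$ against $-2\operatorname{Im}(\bar u u_{xx})$ by parts twice (the boundary terms at $x=0$ vanish because of the $x^2$ weight, and at $x=\infty$ by decay) produces exactly $V(t) = -4\operatorname{Im}\int_0^\infty \bar u x u_x\,dx$. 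Hence $\dot I(t) = V(t) - 2aI(t)$; multiplying by $e^{2bt}$, rewriting $2a = (2a-2b)+2b$, and integrating over $(0,t)$ gives identity (i).

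Next, for $y(t) = -\tfrac14 V(t)$, I would differentiate $V(t)$ using the equation again. This is the more delicate Morawetz/virial computation: $\frac{d}{dt}\operatorname{Im}\int \bar u x u_x\,dx$ involves $\operatorname{Im}\int (\bar u_t x u_x + \bar u x u_{xt})\,dx$; after substituting $u_t$ and its conjugate and integrating by parts carefully, the dispersive part yields the standard $-4\|u_x\|_{L^2}^2$-type term plus a boundary contribution at $x=0$ coming from $u_x(0,t) = -\lambda|u(0,t)|^r u(0,t)$, the interior term $k|u|^pu$ yields a term proportional to $\|u\|_{L^{p+2}}^{p+2}$ after integration by parts in $x$ against the weight, and the damping $iau$ yields a term proportional to $V(t)$ itself (i.e. proportional to $ay$). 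Collecting these, $\dot y + 2ay$ should equal $-\tfrac14\theta_1$ for the appropriate quantity $\theta_1$ defined in \eqref{Vprime}, which one expects to be a linear combination of $\|u_x\|_{L^2}^2$, $|u(0,t)|^{r+2}$, and $\|u\|_{L^{p+2}}^{p+2}$ with coefficients tuned so that $\theta_1$ is comparable to $\theta(t)$ (this comparability is what makes the choice of $b$ and $M$ in Theorem \ref{MainThm01} work). Identity (iii) then follows from (ii) by the same integrating-factor trick: multiply $\dot V + 2aV = \theta_1$ by $e^{2bt}$, write $2a=(2a-2b)+2b$, and integrate.

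The main obstacle I anticipate is the bookkeeping in the second identity: keeping track of the boundary terms at $x=0$ generated by every integration by parts and correctly folding in the nonlinear Neumann condition $u_x(0,t)=-\lambda|u(0,t)|^ru(0,t)$, so that all boundary contributions assemble into a single clean multiple of $|u(0,t)|^{r+2}$ rather than leaving uncontrolled traces of $u$ or $u_x$ at the origin. A secondary subtlety is justifying the decay at $x=\infty$ that kills the boundary terms there and legitimizes differentiating $I$ and $V$ under the integral sign; here I would invoke Remark \ref{localassmp}, which grants a sufficiently smooth, spatially decaying classical local solution, so these manipulations are rigorous on $[0,T_{\max})$. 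Once $\theta_1$ is identified and shown to differ from $(2a-2b)\theta$ only by sign-definite terms (in analogy with \eqref{gs} and the passage to \eqref{e11}), the three displayed identities are immediate consequences of the elementary ODE integrating factor, exactly as in the proof of Lemma \ref{massenergylemma}.
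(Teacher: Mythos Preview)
Your plan is correct and mirrors the paper's own proof: differentiate $I(t)$ and use the equation plus integration by parts with the $x^2$ weight to obtain $\dot I + 2aI = V$, then apply the $e^{2bt}$ integrating factor for (i); differentiate $y(t)$ (equivalently $V$), carry out the Morawetz-type computation to identify $\dot y + 2ay = -\tfrac14\theta_1$ with $\theta_1(t) = 8\|u_x\|_{L^2}^2 + \tfrac{4kp}{p+2}\|u\|_{L^{p+2}}^{p+2} - 4\lambda|u(0,t)|^{r+2}$, and then repeat the integrating-factor step for (iii). The only minor imprecision is the location of the boundary contribution: in the actual computation the $\lambda|u(0,t)|^{r+2}$ term arises not from the $x$-weighted integrals (whose boundary terms at $x=0$ vanish because of the weight) but from the unweighted piece $-\operatorname{Im}\int_0^\infty \bar u\,u_t\,dx$ that appears after integrating $\bar u\,x\,u_{xt}$ by parts once; substituting $u_t$ there gives $\operatorname{Re}\int_0^\infty \bar u\,u_{xx}\,dx = -\|u_x\|_{L^2}^2 - \bar u(0,t)u_x(0,t) = -\|u_x\|_{L^2}^2 + \lambda|u(0,t)|^{r+2}$.
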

\begin{proof}
Differentiating $I(t)$, we have
    \begin{multline}\label{dtI}
        \frac{d}{dt}I(t)=\int_0^\infty x^2(u\bar{u}_t+u_t\bar{u})dx=
        2Re\int_0^\infty x^2u_t\bar{u}dx
        \\=2Im\int_0^\infty (u_{xx}-k|u|^pu-iau)x^2\bar{u}dx=
        -2Im\int_0^\infty(x^2\bar{u})_xu_xdx-2a\int_0^\infty x^2|u|^2dx\\
        =-4Im\int_0^\infty \bar{u}xu_xdx-2a\int_0^\infty x^2|u|^2dx.
    \end{multline} Therefore,
    \begin{equation}I'(t)+2aI(t)=-4 Im \int_0^\infty \bar{u}xu_xdx.\end{equation}
    Adding $2bI(t)$ to both sides,
    \begin{equation}\label{Iest01}I'(t)+2bI(t)=-(2a-2b)I(t)+V(t).
    \end{equation}
    Multiplying both sides by $e^{2bt},$
    \begin{equation}
    \left(I(t)e^{2bt}\right)'=-(2a-2b)I(t)e^{2bt}+V(t)e^{2bt}.
    \end{equation} Integrating over $(0,t)$, we have
    \begin{equation}\label{Iest02}
    e^{2bt}\int_0^\infty x^2|u|^2dx+(2a-2b)
    \int_0^te^{2bs}\int_0^\infty x^2|u|^2dxds=\int_0^\infty x^2|u_0|^2dx+\int_0^tV(s)e^{2bs}ds.
    \end{equation}

    Differentiating $y(t)$, we have
    \begin{equation}
        \frac{d}{dt}y(t)=\frac{d}{dt}Im\int_0^\infty \bar{u}xu_xdx=
        Im\int_0^\infty (\bar{u}_txu_x+\bar{u}xu_{xt})dx.
    \end{equation}
Integrating by parts we obtain
    \begin{multline}Im \int_0^\infty \bar{u}xu_{xt}dx=-Im \int_0^\infty (\bar{u}x)_xu_tdx\\
    =-Im\int_0^\infty \bar{u}_xxu_tdx-Im \int_0^\infty \bar{u}u_tdx.
     \end{multline}
  Hence, \begin{equation}\label{yprime01}
        \frac{d}{dt}y(t)=2Im\int_0^\infty \bar{u}_t xu_xdx-Im\int \bar{u}u_tdx.
    \end{equation}
    The first term on the right hand side of \eqref{yprime01} is
  \begin{multline}
       2Im\int_0^\infty \bar{u}_t xu_xdx=2Im\int_0^\infty \left(i\bar{u}_{xx}-
       ik|u|^p\bar{u}-a\bar{u}\right)xu_x dx\\
       =2Re\int_0^\infty \bar{u}_{xx}xu_x dx-2Re\int_0^\infty kx|u|^p\bar{u}u_x dx
       -2aIm\int_0^\infty x\bar{u}u_xdx,
  \end{multline} where
      \begin{equation}
        2Re\int_0^\infty\bar{u}_{xx}xu_x dx=Re \int_0^\infty x(|u_x|^2)_x dx=
        -\int_0^\infty|u_x|^2dx
    \end{equation} and

     \begin{multline}
        -2Re\int_0^\infty kx|u|^p\bar{u}u_x dx =
        -\frac{2k}{p+2}Re \int_0^\infty x(|u|^{p+2})_x dx\\
        =\frac{2k}{p+2}\int_0^\infty|u|^{p+2}dx=\frac{2k}{p+2}\|u\|_{L^{p+2}(I)}^{p+2}.
    \end{multline}
The second term on the right hand side of \eqref{yprime01} is
    \begin{multline}\label{yprime02}
        -Im\int_0^\infty \bar{u}u_tdx=-Im\int_0^\infty\bar{u}(-iu_{xx}+ik|u|^pu-au)dx\\
        =Re\int_0^\infty\bar{u}u_{xx}dx-k\|u\|_{L^{p+2}(I)}^{p+2}=
        -\int_0^\infty |u_x|^2dx+\lambda|u(0,t)|^{r+2}-k\|u\|_{L^{p+2}(I)}^{p+2}.
    \end{multline}

      Combining \eqref{yprime01}-\eqref{yprime02}, we obtain
    \begin{equation}\label{yprime}
        \frac{d}{dt}y(t) = -2\|u_x\|^2-
        \frac{kp}{p+2}\|u\|_{L^{p+2}(I)}^{p+2}+\lambda|u(0,t)|^{r+2}-
        2aIm\int_0^\infty x\bar{u}u_xdx.
    \end{equation}

Multiplying \eqref{yprime} by $-4$ and rearranging the terms, we have
    \begin{equation}\label{Vprime}\frac{d}{dt}V(t)+2aV(t)=8\|u_x\|^2+
    \frac{4kp}{p+2}\|u\|_{L^{p+2}(I)}^{p+2}-4\lambda|u(0,t)|^{r+2}\equiv \theta_1(t).
    \end{equation}
    Adding $(2b-2a)V(t)$ to both sides of \eqref{Vprime}, multiplying the obtained relation
    by $e^{2bt}$ and integrating over the interval
    $(0,t)$, we obtain
    \begin{equation}\label{V01} V(t)e^{2bt}=V(0)+(2b-2a)\int_0^tV(s)e^{2bs}ds+
    \int_0^t\theta_1(s)e^{2bs}ds.
    \end{equation}
\end{proof}

Let $M=\max\{8,2p\}$ and $b=\frac{a(r+2)(4-M)}{4(r+2)-2M}$,
then $b<0$ since $r>\max\{2,p-2\}$, and moreover

    $$-M\left(\frac{a(r+2)-2b}{2a-2b}\right)\frac{2\lambda}{r+2}|u(0,t)|^{r+2}\ge -
    4\lambda|u(0,t)|^{r+2}.$$ On the other hand, $M\|u_x\|^2\ge 8\|u_x\|^2$ and
    $$
    M\frac{2k}{p+2}\|u\|_{L^{p+2}(I)}^{p+2}\ge \frac{4kp}{p+2}\|u\|_{L^{p+2}(I)}^{p+2}.
    $$

Therefore, $\theta_1(t)\le \theta(t)$, and by \eqref{e1negative} and
\eqref{V01},
    \begin{equation}
    V(t)e^{2bt}\le V(0)+(2b-2a)\int_0^tV(s)e^{2bs}ds,
    \end{equation} which can also be written as
    \begin{equation}
    \label{Vdt}\frac{d}{dt}\left(e^{(2a-2b)t}\int_0^tV(s)
    e^{2bs}ds\right)\le V(0)e^{(2a-2b)t}.
    \end{equation}
    Integrating \eqref{Vdt} over $(0,t)$, we obtain
    $$
    \int_0^tV(s)e^{2bs}ds\le \frac{1}{2a-2b}(1-e^{-(2a-2b)t})V(0).
    $$
    From this inequality, one obtains the blow-up of the solutions.
    Indeed, let
    $$
    z(t)\equiv e^{2bt}\int_0^\infty x^2|u|^2dx.$$ Then by
    \eqref{Iest02},
    $$z(t)\le \int_0^\infty x^2|u_0|^2dx+ \frac{1}{2a-2b}(1-e^{-(2a-2b)t})V(0).$$
    Hence,
 $$\lim_{t\rightarrow T}z(t)=0$$ where $T\equiv -
 \frac{1}{2a-2b}\ln\left( \frac{(2a-2b)\int x^2|u_0|^2dx+V(0)}{V(0)}\right).$
 We choose $u_0$ in such a way that $T>0$ by assumption \eqref{u0assmp}.
 Now, using the decay of the $L^2$ norm which was proved in Lemma
 \ref{massenergylemma}, we deduce the inequality

\begin{equation*}\|u(t)\|_{L^2(I)}^2=-2 Re\int_0^\infty x
u\bar{u}_xdx \le 2\|xu(x,t)\|_{L^2(I)}\cdot\|u_x(t)\|_{L^2(I)}.
\end{equation*}
The last inequality implies: $$ \|u_x(t)\|_{L^2(I)}\ge
\frac{\|u_0(x)\|_{L^2(I)}^2e^{-(2a-2b)t}}{z(t)}\rightarrow
\infty$$
as $t\rightarrow T.$
\subsection{Case $a=0$:}  In this section, we prove Theorem
\ref{MainThm01} for $a = 0$ by obtaining a nonlinear ordinary
differential inequality which yields blow-up of solutions.  The
proof follows by adapting the same argument in \cite{G} to our
model.
$$\frac{1}{2}\|u_x(t)\|_{L^2(I)}^2+\frac{k}{p+2}\|u(t)\|_{L^{p+2}(I)}^{p+2}=
E(0)+\frac{\lambda}{r+2}|u(0,t)|^{r+2}.$$
Then,
$$\frac{r+2}{2}\|u_x(t)\|_{L^2(I)}^2+\frac{k(r+2)}{p+2}\|u(t)\|_{L^{p+2}(I)}^{p+2}\le
\lambda|u(0,t)|^{r+2},$$ provided that $E(0)\le 0.$
$$y'(t)\ge (\frac{r-2}{2})\|u_x\|_{L^2(I)}^2+\frac{k(r-p+2)}{p+2}\|u(t)\|_{L^{p+2}(I)}^{p+2}.
$$ Then $y'(t)\ge \kappa\|u_x(t)\|_{L^2(I)}^2$ for some $\kappa>0$
provided that ${r>\max\{2,p-2\}}$. Therefore $y(t)>0$, since $y(0)>0.$
This means $I'(t)= -4y(t)\le 0.$ Hence, $I(t)\le I(0).$
By definition of $y(t)$, we have $|y(t)|\le \sqrt{I(0)}\|u_x\|_{L^2(I)}.$
Hence, $y'(t)\ge \kappa\frac{y^2(t)}{I(0)}$. Separating the variables and
integrating this differential inequality over the interval $(0,t)$, and using $y(0)>0$,
we get
$$\int_0^t \frac{dy}{y^2}=\int_0^t \frac{\kappa}{I(0)}ds\Rightarrow y(t)\ge
\frac{y(0)I(0)}{I(0)-\kappa y(0)t}.$$  That is to say,
$$\|u_x\|_{L^2(I)}\ge \frac{y(t)}{\sqrt{I(0)}}\ge \frac{y(0)\sqrt{I(0)}}{I(0)-
\kappa y(0)t}.$$  Hence, we deduce that $$\lim_{t\rightarrow T^-}\|u_x(t)\|_{L^2(I)}=
\infty$$ where $T\equiv \frac{I(0)}{\kappa y(0)}$.
\section{Critical Exponent and Exponential Decay Estimates}\label{SecCrt}
\subsection{Critical Exponent Conjecture}\label{Criticalexp}

It is not difficult to obtain uniform boundedness (in time variable) of the $H^1$
norm if $r<2$ for arbitrarily large initial data and if $r=2, p\le 4$ for small initial data.  In order to prove this, one can simply proceed as in \cite{AASKD} for $a=0$.  Regarding the damped situation ($a>0$), see {Section \ref{damprem}} below.  However, we expect that the situation in our model should be better than this due to the defocusing source term $k|u|^pu, k>0$.  We conjecture that if $p>4$, then one can control the $H^1$ norm of the solutions with arbitrarily large initial data, even if $2\le r < p-2$.  In addition, one should be able to control the $H^1$ norm with small data for $r\ge p-2$ whenever $p>4$.  More precisely, we have the following conjecture.
\begin{conjecture}The critical exponent for the nonlinear model \eqref{nlNeumannProb}
is $$r^*=\max\{2,p-2\}.$$
\end{conjecture}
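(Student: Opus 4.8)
The plan is to split the claim into its two natural halves: for $r>\max\{2,p-2\}$ one must produce solutions that blow up in finite time, and for $r<\max\{2,p-2\}$ one must show every local solution is global. The supercritical half is already in hand: by Theorem \ref{MainThm01} it suffices that the set of $u_0\in H^s(\mathbb{R}_+)$ with $E(0)\le 0$ and \eqref{u0assmp} be nonempty, which is standard (one builds such data as in \cite{MT2}, \cite{G}, choosing the amplitude to make $E(0)\le 0$ and the phase to control the sign of $Im\int_0^\infty xu_0'\bar u_0\,dx$ against the lower-order quadratic term on the left of \eqref{u0assmp}). Hence the entire remaining content of the conjecture is \emph{global existence} in the subcritical range $r<\max\{2,p-2\}$, and by the blow-up alternative of Theorem \ref{LocalWellP} this reduces to proving $\sup_{[0,T_{\max})}\|u_x(t)\|_{L^2(I)}<\infty$.

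I would treat the subcritical range in three pieces. For $r<2$: combine the energy identity \eqref{H1iden01b}, the $L^2$ estimate of Lemma \ref{massenergylemma}(i), and the trace bound $|u(0,t)|^{r+2}\le\bigl(2\|u(t)\|_{L^2(I)}\|u_x(t)\|_{L^2(I)}\bigr)^{(r+2)/2}$; since $(r+2)/2<2$, Young's inequality absorbs the boundary term into $\|u_x\|_{L^2(I)}^2$ and gives a uniform $H^1$ bound (this is Theorem \ref{MainThm02}(i) when $a>0$, and the $a=0$ case, with $E(t)$ conserved, goes through as in \cite{AASKD}). For $2\le r<p/2$, which forces $p>4$: write instead $|u(0,t)|^{r+2}\le(r+2)\|u_x(t)\|_{L^2(I)}\,\|u(t)\|_{L^{2(r+1)}(I)}^{r+1}$, interpolate $\|u\|_{L^{2(r+1)}(I)}$ between $L^2(I)$ and $L^{p+2}(I)$ --- legitimate because $2(r+1)\le p+2$ --- and absorb using $\|u_x\|_{L^2(I)}^2$ together with the \emph{defocusing} term $\frac{2k}{p+2}\|u\|_{L^{p+2}(I)}^{p+2}$, which enters $E(t)$ with a favourable sign; this is Theorem \ref{MainThm02}(ii). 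Note the union of these two pieces is $r<\max\{2,p/2\}$, so for $p\le 4$ there is no gap and only the large-data behavior at $r=2$ remains, while for $p>4$ the genuine gap is the window $p/2\le r<p-2$.

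The hard part, and the reason the statement is only a conjecture, is exactly that window $p/2\le r<p-2$ (nonempty precisely when $p>4$). There $2(r+1)>p+2$, so $\|u\|_{L^{2(r+1)}(I)}$ is genuinely not controlled by $\|u\|_{L^2(I)}$ and $\|u\|_{L^{p+2}(I)}$ alone: any Gagliardo--Nirenberg interpolation of this higher $L^q$ norm reintroduces $\|u_x\|_{L^2(I)}$ with exponent $\ge 2$, Young's inequality no longer closes, and the static energy estimate simply ``cannot see'' that the interior defocusing term should still dominate the focusing boundary trace (that it should is exactly what makes the conjecture plausible). To push $r$ all the way up to $p-2$ I would abandon the fixed-time estimate and use a space--time one: bound $\int_0^T|u(0,t)|^{r+2}\,dt$ by a constant depending only on $\|u_0\|$ plus a small multiple of $\int_0^T\|u_x(t)\|_{L^2(I)}^2\,dt$, exploiting the boundary smoothing encoded in the mixed norm $\sup_{x}\|u(x,\cdot)\|_{H^{(2s+1)/4}(0,T_0)}$ of Theorem \ref{LocalWellP} (a boundary Strichartz/trace estimate), and feed this into the time-integrated form of \eqref{H1iden01b}, using the damping when $a>0$ and energy conservation when $a=0$. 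A second route is a Lyapunov functional built from $E(t)$, $I(t)$, $V(t)$ whose virial component --- recall $\theta_1(t)=8\|u_x\|^2+\frac{4kp}{p+2}\|u\|_{L^{p+2}(I)}^{p+2}-4\lambda|u(0,t)|^{r+2}$ from \eqref{Vprime} --- is \emph{nonnegative} in precisely this regime and thereby pins $\|u_x(t)\|_{L^2(I)}$ from above. Either route requires real work and is, to our knowledge, open; and a complete proof must in addition settle the borderline case $r=\max\{2,p-2\}$ for large data, which is currently known only for small data via Theorem \ref{MainThm02}(iii)--(iv).
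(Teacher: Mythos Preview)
The statement is labelled a \emph{conjecture} in the paper, and the paper does not prove it: Section~\ref{SecCrt} offers only partial evidence (Theorem~\ref{MainThm01} for the supercritical side $r>\max\{2,p-2\}$, and Propositions~\ref{stab1}--\ref{stab4} for pieces of the subcritical and borderline ranges), while explicitly flagging the remaining cases as open in the Remark and Table~1 following Theorem~\ref{MainThm02}. Your proposal correctly recognises this structure and the location of the gap --- the window $\tfrac{p}{2}\le r<p-2$ for $p>4$, together with the large-data borderline --- and your interpolation argument for $2\le r<\tfrac{p}{2}$ is essentially the one the paper gives immediately after stating the conjecture. In that sense there is no discrepancy: both you and the paper arrive at the same partial picture by the same means.

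Where you go beyond the paper is in the two speculative routes for closing the open window. The boundary-smoothing/space--time idea is reasonable in spirit but is not carried out, and the paper makes no such attempt. The virial/Lyapunov suggestion is more problematic: your assertion that $\theta_1(t)=8\|u_x\|^2+\frac{4kp}{p+2}\|u\|_{L^{p+2}(I)}^{p+2}-4\lambda|u(0,t)|^{r+2}$ is ``nonnegative in precisely this regime'' is not a statement about the parameters $r,p$ alone --- the sign of $\theta_1$ depends on the solution, and there is no a~priori reason it should be nonnegative for $\tfrac{p}{2}\le r<p-2$. So this second route, as written, does not yet constitute a plan. Finally, your claim that constructing admissible blow-up data for Theorem~\ref{MainThm01} is ``standard'' deserves a little care: when $p-2<r<p$ the defocusing $L^{p+2}$ term beats the boundary term under naive amplitude scaling, so one must also concentrate the profile near $x=0$; this is doable but not entirely trivial, and the paper does not spell it out either.
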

One can try to use interpolation on $L^p-$spaces to obtain some partial results.
Let us assume $a=0$ for simplicity. Observe that
\begin{equation}\label{a0ineq}\|u_x(t)\|_{L^2(I)}^2+\frac{2k}{p+2}\|u(t)\|_{p+2}^{p+2}
\le |E(0)|+
\frac{2\lambda}{r+2}|u(0)|^{r+2}.\end{equation}  By $\epsilon-$Young's inequality
and Hölder's inequality,
\begin{multline}
  \frac{2\lambda}{r+2}|u(0,t)|^{r+2}=-\frac{2\lambda}{r+2}\int_0^\infty (|u|^{r+2})_xdx=
  -2\lambda Re\int_0^\infty  |u|^{r}u\bar{u}_xdx\\
  \le \epsilon\|u_x\|_{L^2(I)}^2+
  C_{\epsilon}\int_0^\infty |u|^{2r+2}dx=\epsilon\|u_x\|_{L^2(I)}^2+
  C_{\epsilon}\int_0^\infty |u|^{2r+2-\delta}|u|^{\delta}dx\\
  \le \epsilon\|u_x\|_{L^2(I)}^2+
  C_{\epsilon}\|u\|_{L^2(I)}^{\frac{\delta}{2}}\|u\|_{\frac{2(2r+2-\delta)}
  {2-\delta}}^{\frac{2-\delta}{2}},
\end{multline}
 where $\epsilon>0$ is fixed and can be chosen arbitrarily small.

If we choose $\delta=2-\frac{4r}{p}$, which is positive if $p>2r$,
use the mass identity (mass is conserved if $a=0$), and Hölder's
inequality again, then we obtain
\begin{multline*}|u(0,t)|^{r+2}\le
\epsilon\|u_x(t)\|_{L^2(I)}^2+
C_{\epsilon}\|u(t)\|_{L^2(I)}^{\frac{p-2r}{p}}\|u(t)\|_{p+2}^{\frac{2r}{p}}
\le\epsilon\|u_x(t)\|_{L^2(I)}^2\\
+
C_{\epsilon}\|u_0\|_{L^2(I)}^{\frac{(p+2)(p-2r)}{p(p+2)-2r}}+
\frac{2k\epsilon}{p+2}\|u(t)\|_{p+2}^{p+2}.
\end{multline*}
Using this in \eqref{a0ineq}, we get
$$(1-\epsilon)\left(\|u_x(t)\|_{L^2(I)}^2+\frac{2k}{p+2}\|u(t)\|_{p+2}^{p+2}\right)
\le |E(0)|+C_{\epsilon}\|u_0\|_{L^2(I)}^{\frac{(p+2)(p-2r)}{p(p+2)-2r}}.$$
Hence we have $\|u_x\|_{L^2(I)}\le C$ for some $C>0.$

One can improve the above analysis by involving the case ${r>2, r\ge \frac{p}{2}}$
under a smallness assumption on the initial data.  Indeed, by \eqref{a0ineq} and
\eqref{u0t}, we have
\begin{equation}\label{ineqa1}
  \|u_x(t)\|_{L^2(I)}^2\le \|u_0'\|_{L^2(I)}^2+
  \frac{2k}{p+2}\|u_0\|_{p+2}^{p+2}+
  \frac{2^{\frac{r+4}{2}}\lambda}{r+2}\|u_0\|_{L^2(I)}^{\frac{r+2}{2}}
  \|u_x\|_{L^2(I)}^{\frac{r+2}{2}}.
\end{equation}  If we set $\Phi(t)\equiv \|u_x(t)\|_{L^2(I)}^2$,
then \eqref{ineqa1} can be rewritten as
\begin{equation}\label{Phit}
\Phi(t)\le C_1+C_2\Phi(t)^{\sigma},
\end{equation}
where
\begin{equation}\label{C1C2}
C_1\equiv \|u_0'\|_{L^2(I)}^2+
\frac{2k}{p+2}\|u_0\|_{p+2}^{p+2},
C_2\equiv \frac{2^{\frac{r+4}{2}}\lambda}{r+2}\|u_0\|_{L^2(I)}^{\frac{r+2}{2}}
\end{equation} and $\sigma\equiv \frac{r+2}{4}>1.$  Since, $\Phi(0)\le C_1$,
then for sufficiently small $u_0$ one can have
$C_1C_2^{\frac{1}{\sigma-1}}\le \frac{\sigma-1}{\sigma^{\frac{\sigma}{\sigma-1}}}$,
we conclude that $\Phi(t)\le \frac{\sigma}{\sigma-1}C_1.$
For a justification of the smallness argument we carried out,
we use the following lemma.

\begin{lemma}[\cite{S68}]\label{smallnesslem} Suppose
$$\Phi(t)\le C_1+C_2\Phi(t)^\sigma, \ \ \forall t\in [0,T),$$ where $\Phi:[0,T)\rightarrow
\mathbb{R}$ is non-negative, continuous, $C_i>0$ $(i=1,2)$,
$\sigma>1,$ and $\gamma=\frac{1}{\sigma-1}$. If $\Phi(0)\le C_1$ and
$C_1C_2^\gamma\le (\sigma-1)\sigma^{-\gamma-1}$. Then
$$\Phi(t)\le
\frac{\sigma}{\sigma-1}C_1, \ \ \forall t\in [0,T).
$$
\end{lemma}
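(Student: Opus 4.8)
The plan is to recast the functional inequality as a constraint on where the continuous function $\Phi$ is allowed to take values, and then to invoke connectedness. Introduce $g(x):=C_2x^{\sigma}-x+C_1$ for $x\ge 0$, so that the hypothesis $\Phi(t)\le C_1+C_2\Phi(t)^{\sigma}$ is exactly $g(\Phi(t))\ge 0$ for every $t\in[0,T)$. Since $g(0)=C_1>0$ and $g'(x)=\sigma C_2x^{\sigma-1}-1$ has the unique zero $x_0:=(\sigma C_2)^{-\gamma}$, the function $g$ is strictly decreasing on $[0,x_0]$, strictly increasing on $[x_0,\infty)$, and $g(x)\to+\infty$. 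A short computation using $x_0^{\sigma-1}=(\sigma C_2)^{-1}$ gives $g(x_0)=C_1-(\sigma-1)\sigma^{-\gamma-1}C_2^{-\gamma}$, so the smallness hypothesis $C_1C_2^{\gamma}\le(\sigma-1)\sigma^{-\gamma-1}$ is precisely the statement $g(x_0)\le 0$. Assuming this inequality strict (which is all that is used in the applications of the lemma in this paper, where the coefficients are made small by shrinking $u_0$), the set $\{x\ge 0:\ g(x)\ge0\}$ is the disjoint union $[0,a]\cup[b,\infty)$, where $0<a<x_0<b$ are the two roots of $g$.

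Next I would locate $\Phi(0)$ and the target level $K:=\frac{\sigma}{\sigma-1}C_1$ relative to $a$ and $b$. Since $g(C_1)=C_2C_1^{\sigma}>0$ and $C_1\le(\sigma-1)\sigma^{-\gamma-1}C_2^{-\gamma}=\frac{\sigma-1}{\sigma}\,x_0<x_0$, the strict monotonicity of $g$ on $[0,x_0]$ forces $C_1<a$, hence $\Phi(0)\le C_1<a$. For $K$, one computes $g(K)=C_2K^{\sigma}-K+C_1=C_2K^{\sigma}-\frac{C_1}{\sigma-1}$, and raising the hypothesis to the power $\sigma-1$ (using $\gamma(\sigma-1)=1$ and $(\gamma+1)(\sigma-1)=\sigma$) gives $C_1^{\sigma-1}C_2\le(\sigma-1)^{\sigma-1}\sigma^{-\sigma}$, which rearranges to exactly $g(K)\le 0$. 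Therefore $a\le K\le b$.

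Finally, the relation $g(\Phi(t))\ge0$ says that $\Phi$ takes values in $S:=\{x\ge0:\ g(x)\ge0\}=[0,a]\cup[b,\infty)$, which is disconnected because $a<b$. Since $[0,T)$ is an interval and $\Phi$ is continuous, the image $\Phi([0,T))$ is connected; as it contains $\Phi(0)<a$, it must lie entirely in the component $[0,a]$. Hence $\Phi(t)\le a\le K=\frac{\sigma}{\sigma-1}C_1$ for all $t\in[0,T)$, which is the claim.

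I do not expect a serious obstacle: the only care needed is the algebraic bookkeeping, namely verifying that the smallness hypothesis in its stated form is equivalent to $g(x_0)\le0$, that after raising to the power $\sigma-1$ it is equivalent to $g(K)\le0$, and the elementary estimate $C_1<x_0$. The one genuinely degenerate situation is the borderline case $C_1C_2^{\gamma}=(\sigma-1)\sigma^{-\gamma-1}$, in which $a=b=x_0$ and $S=[0,\infty)$, so the constraint becomes vacuous; this case plays no role in the paper and I would simply exclude it.
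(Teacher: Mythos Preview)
The paper does not prove this lemma; it is quoted from Strauss~\cite{S68} and used as a black box. Your argument is the standard one: define $g(x)=C_2x^{\sigma}-x+C_1$, observe that the smallness hypothesis makes the minimum value $g(x_0)\le 0$ so that $\{g\ge 0\}$ splits into two components, and then trap the continuous path $\Phi$ in the left component containing $\Phi(0)$. The algebraic checks you list (that $g(x_0)\le0$ is exactly the hypothesis, that $C_1<x_0$, and that $g(K)\le0$ for $K=\frac{\sigma}{\sigma-1}C_1$) are all correct.

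Your treatment of the borderline case $C_1C_2^{\gamma}=(\sigma-1)\sigma^{-\gamma-1}$ is the right call: there $a=b=x_0=K$, the set $\{g\ge0\}$ is all of $[0,\infty)$, and the connectedness argument gives nothing; indeed the conclusion can genuinely fail (e.g.\ $\Phi(t)=C_1+t$ satisfies every hypothesis but is unbounded). So the lemma as stated with a non-strict inequality is literally false at the boundary, but this is harmless for the paper since the applications (Propositions~\ref{stab4} and the discussion around \eqref{Phit}) only need the strict version obtained by taking $u_0$ small. You might simply state and prove the strict-inequality version, which is what Strauss actually uses.
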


\subsection{Effect of Damping: Proof of Theorem \ref{MainThm02}}\label{damprem} Our analysis above shows that although it is more difficult to prove the blow-up result in the presence of the damping term $iau,a>0$, damping actually plays no particular role in the blow-up condition $r> \max\{2,p-2\}$.  This is analogous to the result in \cite{MT2}.  Nevertheless, damping may have a stabilizing effect in the case that global solutions exist.  See for example \cite{TM1}.  For our model this is easy to show in the case $r<2$, but is difficult to show if $2\le r<p-2$ whenever $p>4$, as in Section \ref{Criticalexp}.

Indeed, by Lemma \ref{massenergylemma}, we have

\begin{multline}\label{u02}|u(0,t)|^2=-\int_0^\infty (|u|^2)_xdx=
-2Re\int_0^\infty  u\bar{u}_xdx\\
\le 2\|u\|_{L^2(I)}\|u_x\|_{L^2(I)}\le 2\|u_0\|_{L^2(I)}e^{-at}\|u_x\|_{L^2(I)},
\end{multline} which implies \begin{equation}\label{u0t}|u(0,t)|^{r+2}\le 2^{\frac{r+2}{2}}\|u_0\|_{L^2(I)}^{\frac{r+2}{2}}e^{-a\frac{(r+2)}{2}t}\|u_x\|_{L^2(I)}^{\frac{r+2}{2}}.\end{equation}  Now, if $r<2$, then by $\epsilon-$Young's inequality, the right hand side of the above inequality is bounded by $$C_\epsilon e^{-a\mu t}+\epsilon \|u_x\|_{L^2(I)}^2$$ where $\epsilon, C_\epsilon>0$ (generic constants) and $\mu=\frac{2(r+2)}{2-r}$.  Observe that $\mu=2+\frac{4r}{2-r}>2.$  Multiplying identity \eqref{H1iden01b} by $e^{2at}$ and integrating over the time interval $(0,t)$,
\begin{equation}\label{Ee2at}
  E(t)e^{2at} = E(0) -\frac{2akp}{p+2}\int_0^t\|u(s)\|_{L^{p+2}(I)}^{p+2}e^{2as}ds+
  {\frac{2a\lambda r}{r+2}\int_0^t|u(0,t)|^{r+2}e^{2as}ds}, \\
\end{equation} which gives
\begin{multline}
  \|u_x\|_{L^2(I)}^2e^{2at}\le \frac{2\lambda}{r+2}|u(0,t)|^{r+2}e^{2at}-
  \frac{2k}{p+2}\|u(t)\|_{L^{p+2}(I)}^{p+2}e^{2at}\\
  +E(0)-\frac{2akp}{p+2}\int_0^t\|u(s)\|_{L^{p+2}(I)}^{p+2}e^{2as}ds+
  {\frac{2a\lambda r}{r+2}\int_0^t|u(0,t)|^{r+2}e^{2as}ds}\\
  \le \frac{2\lambda}{r+2}|u(0,t)|^{r+2}e^{2at}+E(0)+{\frac{2a\lambda r}{r+2}
  \int_0^t|u(0,t)|^{r+2}e^{2as}ds}\\
  \le C_\epsilon e^{a(2-\mu) t}+\epsilon \|u_x\|_{L^2(I)}^2e^{2at}+|E(0)|+
  \int_0^tC_\epsilon e^{a(2-\mu) s}ds+\epsilon\int_0^t\|u_x\|_{L^2(I)}^2e^{2as}ds
\end{multline} which implies
\begin{equation}
  \|u_x\|_{L^2(I)}^2e^{2at}\le C_\epsilon+\epsilon\int_0^t\|u_x\|_{L^2(I)}^2e^{2as}ds.
\end{equation} By Gronwall's lemma,
\begin{equation}
  \|u_x\|_{L^2(I)}^2e^{2at}\le C_\epsilon e^{\epsilon t}\Rightarrow
  \|u_x\|_{L^2(I)}^2\le C_\epsilon e^{-(2a-\epsilon)t}.
\end{equation}
Combining the above result with the $L^2$ decay (see Lemma \ref{massenergylemma}),
we obtain the following result.
\begin{proposition}[Stabilization I]\label{stab1} Let $a>0, r<2$ and $u$ be a
local solution of \eqref{nlNeumannProb} (see Remark \ref{localassmp}).
Then $u$ is global and  decays to zero exponentially fast in the
following sense:
 $$\|u(t)\|_{H^1(I)}^2\le Ce^{-(2a-\epsilon)t}, t\ge 0$$ where $\epsilon>0$
 is fixed and can be chosen arbitrarily small.
\end{proposition}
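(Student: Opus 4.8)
The plan is to combine the exact $L^2$-mass decay from Lemma~\ref{massenergylemma}(i) with the energy identity \eqref{H1iden01b} and a trace--interpolation estimate for the boundary term, closing the argument with Gronwall's inequality. First I would record the two facts already available: the mass decay $\|u(t)\|_{L^2(I)}^2 = e^{-2at}\|u_0\|_{L^2(I)}^2$, and the differential identity $E'(t) = -2aE(t) - \frac{2akp}{p+2}\|u(t)\|_{L^{p+2}(I)}^{p+2} + \frac{2a\lambda r}{r+2}|u(0,t)|^{r+2}$. Multiplying the latter by $e^{2at}$ and integrating over $(0,t)$ gives $E(t)e^{2at} = E(0) - \frac{2akp}{p+2}\int_0^t \|u(s)\|_{L^{p+2}(I)}^{p+2}e^{2as}\,ds + \frac{2a\lambda r}{r+2}\int_0^t |u(0,s)|^{r+2}e^{2as}\,ds$, and since $k>0$ the $L^{p+2}$-integral term is favorable and may be discarded, together with the $L^{p+2}$-term hidden inside $E(t)$ on the left. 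This reduces the whole problem to controlling the boundary traces $|u(0,s)|^{r+2}$.

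The key step is the trace bound: writing $|u(0,t)|^2 = -2\,\mathrm{Re}\int_0^\infty u\bar u_x\,dx \le 2\|u(t)\|_{L^2(I)}\|u_x(t)\|_{L^2(I)}$ and inserting the mass decay yields $|u(0,t)|^{r+2} \le 2^{(r+2)/2}\|u_0\|_{L^2(I)}^{(r+2)/2} e^{-a(r+2)t/2}\|u_x(t)\|_{L^2(I)}^{(r+2)/2}$, which is precisely \eqref{u0t}. Here the hypothesis $r<2$ is essential: the exponent $(r+2)/2$ is strictly less than $2$, so $\epsilon$-Young's inequality (conjugate exponents $\tfrac{4}{r+2}$ and $\tfrac{4}{2-r}$) bounds this by $\epsilon\|u_x(t)\|_{L^2(I)}^2 + C_\epsilon e^{-a\mu t}$ with $\mu = \frac{2(r+2)}{2-r} > 2$. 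Substituting into the integrated energy identity, and using $\|u_x(t)\|_{L^2(I)}^2 \le E(t) + \frac{2\lambda}{r+2}|u(0,t)|^{r+2}$ to pass from $E(t)$ back to $\|u_x(t)\|_{L^2(I)}^2$, produces $\|u_x(t)\|_{L^2(I)}^2 e^{2at} \le C_\epsilon + \epsilon\int_0^t \|u_x(s)\|_{L^2(I)}^2 e^{2as}\,ds$, since the leftover $e^{a(2-\mu)t}$ and $\int_0^t e^{a(2-\mu)s}\,ds$ terms are uniformly bounded because $\mu>2$. Gronwall's lemma then gives $\|u_x(t)\|_{L^2(I)}^2 e^{2at} \le C_\epsilon e^{\epsilon t}$, i.e. $\|u_x(t)\|_{L^2(I)}^2 \le C_\epsilon e^{-(2a-\epsilon)t}$; adding the mass decay yields the stated $H^1$ estimate, and globality is immediate from the blow-up alternative in Remark~\ref{localassmp}, since $\|u_x(t)\|_{L^2(I)}$ remains finite on every bounded time interval.

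The main obstacle is the wrong sign of the focusing boundary nonlinearity ($\lambda>0$): the term $\frac{2a\lambda r}{r+2}|u(0,t)|^{r+2}$ pumps energy in and cannot simply be thrown away. The argument only closes because the factor $e^{-a(r+2)t/2}$ harvested from the mass decay, combined with the sub-quadratic exponent $(r+2)/2$ forced by $r<2$, leaves a net decay rate $\mu>2$ after Young's inequality, which outpaces the weight $e^{2at}$; for $r\ge 2$ this mechanism breaks down and one is pushed to the small-data analysis reflected in the other parts of Theorem~\ref{MainThm02}. A secondary, purely bookkeeping point is to track the generic constants $C_\epsilon$ and to verify that the arbitrarily small $\epsilon$ from Young's inequality is exactly the $\epsilon$ that appears in the final exponent $2a-\epsilon$.
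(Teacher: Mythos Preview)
Your proposal is correct and follows essentially the same route as the paper: mass decay from Lemma~\ref{massenergylemma}(i), the trace estimate \eqref{u0t}, $\epsilon$-Young with $\mu=\frac{2(r+2)}{2-r}>2$, substitution into the integrated form \eqref{Ee2at} of \eqref{H1iden01b}, and Gronwall to close. Your additional remarks on why $r<2$ is needed and on the bookkeeping of $\epsilon$ are accurate and match the paper's treatment.
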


Regarding the powers $r\ge 2$, one can also obtain similar decay estimates, but only under a smallness assumption on the initial data for some values of $p$.

Let us start with the case $2\le r <\frac{p}{2}$.  By an argument similar to that in Section
\ref{Criticalexp}, we have the following estimate:
$$\frac{2\lambda}{r+2}|u(0,t)|^{r+2}\le \epsilon\|u_x\|_{L^2(I)}^2+
C_{\epsilon}\|u_0\|_{L^2(I)}^{\mu}e^{-a\mu t}+\frac{2k\epsilon}{p+2}\|u\|_{p+2}^{p+2}
$$ where \begin{equation}\label{mudef}\mu=\frac{(p+2)(p-2r)}{p(p+2)-2r}>0.
\end{equation}  By \eqref{Ee2at}, we have \begin{multline}
  \|u_x\|_{L^2(I)}^2e^{2at}+\frac{2k}{p+2}\|u(t)\|_{L^{p+2}(I)}^{p+2}e^{2at}\le
  \frac{2\lambda}{r+2}|u(0,t)|^{r+2}e^{2at}-\\
  +E(0)-\frac{2akp}{p+2}\int_0^t\|u(s)\|_{L^{p+2}(I)}^{p+2}e^{2as}ds+
  {\frac{2a\lambda r}{r+2}\int_0^t|u(0,t)|^{r+2}e^{2as}ds}\\
  \le \frac{2\lambda}{r+2}|u(0,t)|^{r+2}e^{2at}+E(0)+
  {\frac{2a\lambda r}{r+2}\int_0^t|u(0,t)|^{r+2}e^{2as}ds}\\
  \le |E(0)|+C_\epsilon e^{a(2-\mu) t}+\int_0^tC_\epsilon e^{a(2-\mu) s}ds\\
  +\epsilon \left(\|u_x\|_{L^2(I)}^2+\frac{2k}{p+2}\|u\|_{p+2}^{p+2}\right)e^{2at}+
  \epsilon\int_0^t\left(\|u_x\|_{L^2(I)}^2+\frac{2k}{p+2}\|u\|_{p+2}^{p+2}\right)e^{2as}ds.
\end{multline}

Observe that $$\int_0^tC_\epsilon e^{a(2-\mu) s}ds=
\frac{C_\epsilon}{a(2-\mu)} \left(e^{a(2-\mu) t}-1\right)\le
\frac{C_\epsilon}{a(2-\mu)}e^{a(2-\mu) t}.$$

Let us set $\Psi(t)\equiv \left(\|u_x\|_{L^2(I)}^2+
\frac{2k}{p+2}\|u\|_{p+2}^{p+2}\right)e^{2at}$, then the above inequality reads
$$(1-\epsilon)\Psi(t) \le \alpha(t) + \epsilon \int_0^t\Psi(s)ds$$
where $\alpha(t)\equiv |E(0)|+C_\epsilon\left(1+\frac{1}{a(2-\mu)}\right) e^{a(2-\mu) t}$.
Note that $\alpha$ is a non-decreasing function since $\mu<2.$
Now, by Gronwall's lemma we have
$$\Psi(t)\le \frac{1}{1-\epsilon}\alpha(t)\exp\left(\frac{\epsilon t}{1-\epsilon}\right),
$$ which gives
$$\|u_x\|_{L^2(I)}^2\le Ce^{-(a\mu-\epsilon)t}, t\ge 0.$$
This is a slower rate of decay than in Proposition \ref{stab1}.
Hence, we proved the following proposition.
\begin{proposition}[Stabilization II]\label{stab2} Let $a>0, 2\le r <\frac{p}{2}$ and $u$ be a local solution of \eqref{nlNeumannProb} (see Remark \ref{localassmp}).   Then $u$ is global and decays to zero exponentially fast in the following sense:
 $$\|u(t)\|_{H^1(I)}^2\le Ce^{-(a\mu-\epsilon)t}, t\ge 0,$$ where $\mu$ is given by \eqref{mudef}, and $\epsilon>0$ is fixed and can be chosen arbitrarily small.
\end{proposition}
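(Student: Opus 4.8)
The plan is to bootstrap from the two identities of Lemma \ref{massenergylemma}, controlling the focusing boundary term $|u(0,t)|^{r+2}$ by the good quantities $\|u_x(t)\|_{L^2(I)}^2$ and $\|u(t)\|_{L^{p+2}(I)}^{p+2}$ together with a fast-decaying $L^2$ factor coming from the damping. First I would estimate the trace by writing $|u(0,t)|^{r+2}=-(r+2)Re\int_0^\infty |u|^r u\bar u_x\,dx$ and applying $\epsilon$-Young to peel off $\epsilon\|u_x(t)\|_{L^2(I)}^2$ and a term $C_\epsilon\int_0^\infty |u|^{2r+2}\,dx$. The key interpolation, as in Section \ref{Criticalexp} for $a=0$, is to split $|u|^{2r+2}=|u|^{2r+2-\delta}|u|^{\delta}$ with the choice $\delta=2-\tfrac{4r}{p}$, which is \emph{positive precisely because} $r<p/2$; two applications of H\"older together with the mass identity $\|u(t)\|_{L^2(I)}^2=e^{-2at}\|u_0\|_{L^2(I)}^2$ should give
\[
\tfrac{2\lambda}{r+2}|u(0,t)|^{r+2}\le \epsilon\|u_x(t)\|_{L^2(I)}^2+\tfrac{2k\epsilon}{p+2}\|u(t)\|_{L^{p+2}(I)}^{p+2}+C_\epsilon\|u_0\|_{L^2(I)}^{\mu}e^{-a\mu t},
\]
with $\mu$ as in \eqref{mudef}; one checks $0<\mu<2$ using only $0<r<p/2$.

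Next I would feed this into the energy identity \eqref{H1iden01b}: multiplying by $e^{2at}$ and integrating yields \eqref{Ee2at}, in which, since $k>0$, the term $-\tfrac{2akp}{p+2}\int_0^t\|u(s)\|_{L^{p+2}(I)}^{p+2}e^{2as}\,ds$ is $\le 0$ and may be discarded, while the remaining boundary integral $\tfrac{2a\lambda r}{r+2}\int_0^t|u(0,s)|^{r+2}e^{2as}\,ds$ is handled by the same trace estimate. Using $\mu<2$ so that $\int_0^t C_\epsilon e^{a(2-\mu)s}\,ds\le \tfrac{C_\epsilon}{a(2-\mu)}e^{a(2-\mu)t}$, I would set
\[
\Psi(t)\equiv\Big(\|u_x(t)\|_{L^2(I)}^2+\tfrac{2k}{p+2}\|u(t)\|_{L^{p+2}(I)}^{p+2}\Big)e^{2at}
\]
and, after absorbing the $\epsilon$-terms into the left-hand side (legitimate for $\epsilon$ small), obtain an inequality of the form $(1-\epsilon)\Psi(t)\le \alpha(t)+\epsilon\int_0^t\Psi(s)\,ds$ with $\alpha(t)=|E(0)|+C_\epsilon\big(1+\tfrac{1}{a(2-\mu)}\big)e^{a(2-\mu)t}$, which is non-decreasing exactly because $\mu<2$.

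Gronwall's lemma in integral form then gives $\Psi(t)\le\tfrac{1}{1-\epsilon}\alpha(t)\exp\!\big(\tfrac{\epsilon t}{1-\epsilon}\big)$, hence $\|u_x(t)\|_{L^2(I)}^2\le e^{-2at}\Psi(t)\le Ce^{a(2-\mu)t}e^{-2at}e^{\epsilon' t}=Ce^{-(a\mu-\epsilon')t}$, where $\epsilon'=\epsilon/(1-\epsilon)\to 0$ as $\epsilon\to 0$. Combining with $\|u(t)\|_{L^2(I)}^2=e^{-2at}\|u_0\|_{L^2(I)}^2$, which decays faster since $a\mu<2a$, delivers the claimed $H^1$ bound, and global existence follows from the blow-up alternative of Remark \ref{localassmp}: the $H^1$ norm stays bounded (indeed decays) on every finite interval, so $T_{max}=\infty$.

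The step I expect to be the main obstacle is the interpolation producing the boundary estimate with the \emph{correct} exponent $\mu$ and, simultaneously, with both absorbable terms $\epsilon\|u_x(t)\|_{L^2(I)}^2$ and $\epsilon\|u(t)\|_{L^{p+2}(I)}^{p+2}$ carrying arbitrarily small constants; this is what forces the precise choice $\delta=2-\tfrac{4r}{p}$ and is exactly where the hypothesis $r<p/2$ enters. Everything after that is careful bookkeeping with $\epsilon$-Young and Gronwall, keeping track of the sign conditions $k>0$, $\lambda>0$ and of $\mu<2$, which is what makes $\alpha$ non-decreasing and the final exponent $a\mu-\epsilon$ positive.
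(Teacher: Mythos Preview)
Your proposal is correct and follows essentially the same route as the paper: the identical trace/interpolation estimate with the choice $\delta=2-\tfrac{4r}{p}$ (together with the mass decay) to produce the boundary bound with exponent $\mu$, insertion into \eqref{Ee2at} with the negative $L^{p+2}$ integral dropped, the same definition of $\Psi$ and $\alpha$, and the same Gronwall conclusion. The only addition is your explicit remark that global existence follows from the blow-up alternative, which the paper leaves implicit.
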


Now, let us consider the case $r=2$ and $p\le 4$. Using \eqref{u0t},
we obtain \begin{equation}|u(0,t)|^4\le 2^2\|u_0\|_{L^2(I)}^{2}e^{-2at}\|u_x\|_{L^2(I)}^{2}.
\end{equation}  Now, by \eqref{Ee2at}, we have
\begin{multline}
  \|u_x\|_{L^2(I)}^2e^{2at}\le \lambda|u(0,t)|^{4}e^{2at}+|E(0)|+
  {a\lambda}\int_0^t|u(0,t)|^{4}e^{2as}ds\\
  \le |E(0)|+4\lambda\|u_0\|_{L^2(I)}^{2}e^{-2at}\|u_x\|_{L^2(I)}^{2}e^{2at}+{4a\lambda\|u_0\|_{L^2(I)}^{2}}\int_0^te^{-2as}\|u_x\|_{L^2(I)}^{2}e^{2as}ds.
\end{multline}

Now, if we assume $\|u_0\|_{L^2(I)}^{2}<\frac{1}{4\lambda}$, and since $e^{-2at}\le 1$, we have
\begin{equation}
  (1-4\lambda\|u_0\|_{L^2(I)}^{2})\|u_x\|_{L^2(I)}^2e^{2at}\le |E(0)|+{4a\lambda\|u_0\|_{L^2(I)}^{2}}\int_0^te^{-2as}\|u_x\|_{L^2(I)}^{2}e^{2as}ds,
\end{equation} from which it follows that
\begin{equation}
  \|u_x\|_{L^2(I)}^2e^{2at}\le \frac{|E(0)|}{1-4\lambda\|u_0\|_{L^2(I)}^{2}}+\frac{{4a\lambda\|u_0\|_{L^2(I)}^{2}}}{1-4\lambda\|u_0\|_{L^2(I)}^{2}}\int_0^te^{-2as}\|u_x\|_{L^2(I)}^{2}e^{2as}ds.
\end{equation}  Applying Gronwall's inequality to the above, we get :
\begin{multline}\label{decayr2}
  \|u_x\|_{L^2(I)}^2e^{2at}\le \frac{|E(0)|}{1-4\lambda\|u_0\|_{L^2(I)}^{2}}\exp\left(\frac{{4a\lambda\|u_0\|_{L^2(I)}^{2}}}{1-4\lambda\|u_0\|_{L^2(I)}^{2}}{\int_0^te^{-2as}ds}\right)\\
  \le \frac{|E(0)|}{1-4\lambda\|u_0\|_{L^2(I)}^{2}}\exp\left(\frac{{2\lambda\|u_0\|_{L^2(I)}^{2}}}{1-4\lambda\|u_0\|_{L^2(I)}^{2}}\right).
\end{multline}  Hence, there exists $C>0$ such that $\|u_x(t)\|_{L^2(I)}^2\le Ce^{-2at}$ for $t\ge 0$.  Therefore, we have proved the following result.
\begin{proposition}[Stabilization III]\label{stab3} Let $a>0, r=2, p\le 4$ and $u$ be a local solution of \eqref{nlNeumannProb} (see Remark \ref{localassmp}) such that $u_0$ is sufficiently small in $L^2$ sense.   Then $u$ is global and moreover $u$ decays to zero exponentially fast in the following sense:
 $$\|u(t)\|_{H^1(I)}^2\le Ce^{-2at}, t\ge 0.$$
\end{proposition}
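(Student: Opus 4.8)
The plan is to follow the same weighted-energy scheme already used for Propositions \ref{stab1} and \ref{stab2}, except that in the regime $r=2$, $p\le 4$ --- which is precisely the range of $r=2$ \emph{not} covered by Proposition \ref{stab2}, since that result needs $r<p/2$, i.e.\ $p>4$ --- one cannot afford the $L^{p+2}$--interpolation trick; instead one absorbs the focusing boundary term directly, and this is where the smallness of $\|u_0\|_{L^2(I)}$ is used.

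First I would assemble the ingredients from Lemma \ref{massenergylemma}: the $L^2$ identity $\|u(t)\|_{L^2(I)}^2=e^{-2at}\|u_0\|_{L^2(I)}^2$ and the energy identity \eqref{H1iden01b}. Multiplying \eqref{H1iden01b} by $e^{2at}$ and integrating over $(0,t)$ gives \eqref{Ee2at}. Since $k>0$, the term $-\frac{2akp}{p+2}\int_0^t\|u(s)\|_{L^{p+2}(I)}^{p+2}e^{2as}\,ds$ is nonpositive and may simply be discarded --- this is the only place the defocusing interior nonlinearity plays a role --- leaving
$$\|u_x(t)\|_{L^2(I)}^2e^{2at}\le |E(0)|+\lambda|u(0,t)|^{4}e^{2at}+a\lambda\int_0^t|u(0,t)|^{4}e^{2as}\,ds.$$

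Next I would estimate the boundary contribution. From $|u(0,t)|^2=-2Re\int_0^\infty u\bar u_x\,dx\le 2\|u\|_{L^2(I)}\|u_x\|_{L^2(I)}$ together with the $L^2$--decay one obtains \eqref{u0t} with $r=2$, namely $|u(0,t)|^{4}\le 4\|u_0\|_{L^2(I)}^{2}e^{-2at}\|u_x(t)\|_{L^2(I)}^{2}$. Substituting this and using $e^{-2at}\le 1$ turns the above into
$$\bigl(1-4\lambda\|u_0\|_{L^2(I)}^{2}\bigr)\|u_x(t)\|_{L^2(I)}^2e^{2at}\le |E(0)|+4a\lambda\|u_0\|_{L^2(I)}^{2}\int_0^te^{-2as}\|u_x(s)\|_{L^2(I)}^{2}e^{2as}\,ds.$$
Here the hypothesis enters: choosing $u_0$ with $4\lambda\|u_0\|_{L^2(I)}^{2}<1$ makes the coefficient on the left strictly positive, so I can divide through and apply Gronwall's inequality to $\Phi(t):=\|u_x(t)\|_{L^2(I)}^2e^{2at}$. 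The point to watch is that the Gronwall integrating factor is $\exp\!\bigl(\tfrac{4a\lambda\|u_0\|_{L^2(I)}^2}{1-4\lambda\|u_0\|_{L^2(I)}^2}\int_0^te^{-2as}\,ds\bigr)\le \exp\!\bigl(\tfrac{2\lambda\|u_0\|_{L^2(I)}^2}{1-4\lambda\|u_0\|_{L^2(I)}^2}\bigr)$, a constant independent of $t$; hence $\Phi(t)\le C$, that is $\|u_x(t)\|_{L^2(I)}^2\le Ce^{-2at}$, as in \eqref{decayr2}. Adding the $L^2$--decay from Lemma \ref{massenergylemma} gives $\|u(t)\|_{H^1(I)}^2\le Ce^{-2at}$, and since $\|u_x(t)\|_{L^2(I)}$ then stays finite on every bounded interval, the blow-up alternative of Remark \ref{localassmp} forces $T_{max}=\infty$, so $u$ is global.

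There is no serious obstacle once \eqref{Ee2at} and \eqref{u0t} are in hand: the argument is essentially forced, the only care being to check that the Gronwall constant does not depend on $t$. The genuine limitation --- not an obstacle to this proof but to a stronger statement --- is that the focusing boundary term can only be absorbed when $\|u_0\|_{L^2(I)}$ is small; for large data in the range $r=2$, $p\le 4$ (and similarly $r>2$, $p-2\ge r\ge p/2$) the method breaks down, and whether blow-up can occur there is left open, as noted in the remark following Theorem \ref{MainThm02}.
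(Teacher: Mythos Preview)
Your proof is correct and follows essentially the same route as the paper: you derive \eqref{Ee2at}, drop the nonpositive $L^{p+2}$ contributions, insert the trace bound \eqref{u0t} with $r=2$, absorb the pointwise boundary term via the smallness condition $4\lambda\|u_0\|_{L^2(I)}^2<1$, and close with Gronwall, noting that $\int_0^te^{-2as}\,ds\le\frac{1}{2a}$ keeps the integrating factor bounded. The only addition you make beyond the paper's argument is the explicit appeal to the blow-up alternative to conclude globality, which is appropriate.
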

Observe that the decay rate obtained in Proposition \ref{stab3} is faster than the decay rates in Proposition \ref{stab1} and Proposition \ref{stab2}.

Now, let us consider the case $r>2, r\ge \frac{p}{2}$.

By \eqref{Ee2at} and \eqref{u0t},
\begin{equation}
\|u_x\|_{L^2(I)}^2e^{2at}\le C_1+C_2\|u_x\|_{L^2(I)}^{\frac{r+2}{2}}e^{2at}+arC_2\int_0^te^{-a(\frac{r+2}{2})s}\|u_x\|_{L^2(I)}^{\frac{r+2}{2}}e^{2as}ds.
\end{equation} where $C_1$ and $C_2$ are given in \eqref{C1C2}.

 Let us define $\displaystyle S(t)=\sup_{[0,t]}\{\|u_x\|_{L^2(I)}^2e^{2as}\}$.   Then since $\frac{r+2}{4}>1$, we have
\begin{multline}
S(t)\le C_1+C_2S(t)^{\frac{r+2}{4}}+arC_2S(t)^{\frac{r+2}{4}}\int_0^te^{-a(\frac{r+2}{2})s}ds\\
\le C_1+\left(1+\frac{4r}{r+2}\right)C_2S(t)^{\frac{r+2}{4}}.
\end{multline}  By the same smallness argument in \eqref{Phit} or Lemma \ref{smallnesslem}, we obtain $$S(t)\le \frac{2(r+2)}{r-2}C_1.$$  Hence, we proved the following proposition,
\begin{proposition}[Stabilization IV]\label{stab4} Let $a>0, r>2, r\ge \frac{p}{2}$ and $u$ be a local solution of \eqref{nlNeumannProb} (see Remark \ref{localassmp}) such that $u_0$ is sufficiently small in $H^1\cap L^{p+2}$ sense.   Then $u$ is global and moreover $u$ decays to zero exponentially fast in the following sense:
 $$\|u(t)\|_{H^1(I)}^2\le Ce^{-2at}, t\ge 0.$$
\end{proposition}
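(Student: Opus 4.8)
The plan is to derive a closed superlinear integral inequality for $\|u_x(t)\|_{L^2(I)}^2e^{2at}$ and then invoke the smallness lemma (Lemma \ref{smallnesslem}) to bound it uniformly in time. First I would start from the weighted energy identity \eqref{Ee2at}, obtained by multiplying \eqref{H1iden01b} by $e^{2at}$ and integrating over $(0,t)$. The term $-\frac{2akp}{p+2}\int_0^t\|u(s)\|_{L^{p+2}(I)}^{p+2}e^{2as}ds$ is non-positive and can be discarded, and so can the contribution $-\frac{2k}{p+2}\|u(t)\|_{L^{p+2}(I)}^{p+2}e^{2at}$ coming from the definition \eqref{energy} of $E$; one is left with $\|u_x\|_{L^2(I)}^2e^{2at}$ bounded by $|E(0)|$ plus boundary terms of the shape $|u(0,t)|^{r+2}e^{2at}$ and $\frac{2a\lambda r}{r+2}\int_0^t|u(0,t)|^{r+2}e^{2as}ds$.

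Second, I would estimate the boundary trace by \eqref{u0t}, namely $|u(0,t)|^{r+2}\le 2^{\frac{r+2}{2}}\|u_0\|_{L^2(I)}^{\frac{r+2}{2}}e^{-a\frac{r+2}{2}t}\|u_x\|_{L^2(I)}^{\frac{r+2}{2}}$, which follows from the fundamental theorem of calculus together with the $L^2$-decay in Lemma \ref{massenergylemma}. Here the hypothesis $r\ge\frac p2$ is precisely what blocks the cleaner interpolation argument available for $2\le r<\frac p2$ (where the defocusing term absorbed the boundary term), so I must retain the full power $\frac{r+2}{2}$ of $\|u_x\|_{L^2(I)}$. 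Introducing $S(t):=\sup_{s\in[0,t]}\|u_x(s)\|_{L^2(I)}^2e^{2as}$ and using $\int_0^t e^{-a\frac{r+2}{2}s}\,ds\le \frac{2}{a(r+2)}$ (finite thanks to $a>0$), the inequality collapses to $S(t)\le C_1+\left(1+\tfrac{4r}{r+2}\right)C_2\,S(t)^{\frac{r+2}{4}}$ with $C_1,C_2$ as in \eqref{C1C2}; note $C_1$ carries the $\frac{2k}{p+2}\|u_0\|_{L^{p+2}(I)}^{p+2}$ contribution and $C_2$ the $\|u_0\|_{L^2(I)}^{\frac{r+2}{2}}$ one, which is why smallness in $H^1\cap L^{p+2}$ is imposed.

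Third, since $r>2$ forces $\sigma:=\frac{r+2}{4}>1$, I would apply Lemma \ref{smallnesslem}. Because $S(0)=\|u_0'\|_{L^2(I)}^2\le C_1$ holds automatically, it suffices to choose $u_0$ small enough that $C_1C_2^{1/(\sigma-1)}\le(\sigma-1)\sigma^{-\sigma/(\sigma-1)}$, which is possible since $C_1,C_2\to 0$ as $u_0\to 0$ in $H^1\cap L^{p+2}$. The lemma then yields $S(t)\le\frac{\sigma}{\sigma-1}C_1=\frac{2(r+2)}{r-2}C_1$ for all $t$ in the maximal existence interval. This being a uniform bound on $\|u_x(t)\|_{L^2(I)}$, the blow-up alternative in Remark \ref{localassmp} forces $T_{max}=\infty$; moreover $\|u_x(t)\|_{L^2(I)}^2\le\frac{2(r+2)}{r-2}C_1\,e^{-2at}$, and combining with $\|u(t)\|_{L^2(I)}^2=e^{-2at}\|u_0\|_{L^2(I)}^2$ from Lemma \ref{massenergylemma} gives $\|u(t)\|_{H^1(I)}^2\le Ce^{-2at}$.

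The main obstacle I expect is the continuation/bootstrap argument packaged inside Lemma \ref{smallnesslem}: one must ensure the continuous function $S(t)$ never crosses the gap between the two roots of $x=C_1+\bigl(1+\tfrac{4r}{r+2}\bigr)C_2 x^\sigma$, so that the superlinear inequality keeps forcing the smaller bound. Once the smallness condition on the data is arranged this is routine; the only genuine care needed is verifying that the constants produced by \eqref{Ee2at} and \eqref{u0t} match exactly those in \eqref{C1C2} and that each time integral is controlled uniformly, both of which rely on $a>0$ and on discarding the favorably signed defocusing contributions.
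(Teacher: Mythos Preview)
Your proposal is correct and follows essentially the same route as the paper's proof: both start from \eqref{Ee2at}, drop the favorably signed defocusing terms, control the boundary trace via \eqref{u0t}, pass to the supremum $S(t)=\sup_{s\in[0,t]}\|u_x(s)\|_{L^2(I)}^2e^{2as}$, and invoke Lemma \ref{smallnesslem} with $\sigma=\frac{r+2}{4}>1$ to obtain $S(t)\le\frac{\sigma}{\sigma-1}C_1$, which together with the $L^2$-decay gives the $H^1$ estimate. Your account is in fact a bit more explicit than the paper's about why smallness in $H^1\cap L^{p+2}$ is the right hypothesis (through $C_1,C_2$) and about how the blow-up alternative converts the uniform bound into globality; one tiny caution is that in passing from \eqref{Ee2at} to the $C_1$-inequality you should use $E(0)\le C_1$ directly rather than go through $|E(0)|$, since $|E(0)|\le C_1$ need not hold when $E(0)<0$.
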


\medskip

\end{document}